\definecolor{green}{HTML}{15B01A}
\newcommand{\IN}{{\mathbb{N}}}
\newcommand{\eChar}{\begin{enumerate}[(i)]}
\newcommand{\eCharR}{\begin{enumerate}[(a)]}
\newcommand{\eBr}{\begin{enumerate}[(1)]}
\newcommand{\diam}{\operatorname{diam}}
\newcommand{\Ind}{\operatorname{Ind}}
\newcommand\mapsfrom{\mathrel{\reflectbox{\ensuremath{\mapsto}}}}
\title
{Bonnet-Myers sharp graphs of diameter three}
\author[1]{S. Kamtue}
\affil[1]{Department of Mathematical Sciences, Durham University, UK}
\date{\today}
\theoremstyle{plain}
\newtheorem{lemma}{Lemma}[section]
\newtheorem{theorem}[lemma]{Theorem}
\newtheorem{proposition}[lemma]{Proposition}
\newtheorem{corollary}[lemma]{Corollary}
\theoremstyle{definition}
\newtheorem{definition}{Definition}[section]
\newtheorem{remark}[lemma]{Remark}
\numberwithin{equation}{section}
\begin{document}

\maketitle

\begin{abstract}
Regular graphs which are Bonnet-Myers sharp (in the sense of Ollivier Ricci curvature) and self-centered have been completely classified, and it is a natural question whether the condition of self-centeredness can be removed in the classification. We prove that this condition is indeed not necessary in the special case of Bonnet-Myers sharp graphs of diameter 3.
\end{abstract}

\section{Introduction and statement of results}

A classical theorem in Riemannian geometry is Bonnet-Myers theorem \cite{My41}, which states that a complete $n$-dimensional manifold $M$ with positive Ricci curvature lower bound $K = \inf\limits_{\substack{x\in M, v\in T_xM\\ |v|=1}} {\rm Ric}_x(v) > 0$ must be compact and its diameter has an upper bound 
\begin{equation} \label{eq:BM_RG_ineq}
{\rm diam}(M) \le \pi \sqrt{\frac{n-1}{K}}. 
\end{equation}
Moreover, Cheng's rigidity theorem \cite{Cheng75} states that equality holds in \eqref{eq:BM_RG_ineq} if and only if $M$ is the $n$-dimensional
round sphere.

In the setting of graphs, there is a discrete analogue of Bonnet-Myers theorem for Ollivier Ricci curvature (see, e.g., \cite{LLY11,Ol09}) which states that for a graph $G$ with positive curvature lower bound $K = \inf\limits_{\text{edges} \{x,y\} } \kappa(x,y) > 0$, its diameter satisfies an upper bound
\begin{equation} \label{eq:BM_Ol_ineq}
{\rm diam}(G) \le \frac{2}{K}. 
\end{equation}
where $\kappa$ is Ollivier Ricci curvature, introduced in \cite{Ol09} and modified in \cite{LLY11}. Regarding to an analogue of Cheng's rigidity result, the authors of \cite{rigidity} attempt to determine all regular graphs which attain equality in \eqref{eq:BM_Ol_ineq}. Such graphs are called \emph{Bonnet-Myers sharp}. The classification result given in \cite{rigidity}, however, is based on the additional assumption that graphs are \emph{self-centered}, that is, every vertex has another corresponding vertex which is diametrically distance apart. 

\begin{theorem} [\cite{rigidity}] \label{thm:bm_sc}
Self-centered Bonnet-Myers sharp graphs are precisely the following graphs:
\begin{enumerate}
\item hypercubes $Q^n$, $n\ge 1$;
\item cocktail party graphs $CP(n)$, $n\ge 3$;
\item the Johnson graphs $J(2n,n)$, $n\ge 3$;
\item even-dimensional demi-cubes $Q^{2n}_{(2)}$, $n\ge 3$;
\item the Gosset graph;
\end{enumerate}
and all Cartesian products $G=G_1\times G_2 \times \cdots \times G_k$ where $G_i$'s are from 1.-5. and satisfy 
$$\frac{\deg(G_1)}{\diam(G_1)}= \frac{\deg(G_2)}{\diam(G_2)}=...=  \frac{\deg(G_k)}{\diam(G_k)}.$$
\end{theorem}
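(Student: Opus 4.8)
The plan is to establish the two inclusions separately: that every graph in the list, together with every Cartesian product obeying the ratio condition, is self-centered and Bonnet-Myers sharp; and that these are the only such graphs. For the first inclusion, each of the five basic graphs is vertex- and edge-transitive and carries an obvious diametrical involution (bitwise complement for $Q^n$, the matching partner for $CP(n)$, set complement for $J(2n,n)$, complement within even-weight vectors for $Q^{2n}_{(2)}$, and the antipodal map of the associated polytope for the Gosset graph), so self-centeredness is immediate and one only has to compute $\kappa$ on a single edge and check that it equals $2/\diam$: this is a finite computation, giving $2/\diam$ in every case (for instance $2/n$ for $Q^n$, $J(2n,n)$ and $Q^{2n}_{(2)}$, $1$ for $CP(n)$, and $2/3$ for the Gosset graph). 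For a product $G=G_1\times\cdots\times G_k$ one uses additivity of the diameter together with the product formula for Ollivier curvature, $\kappa_G(e)=\frac{\deg(G_i)}{\deg(G)}\kappa_{G_i}(e)$ for an edge $e$ in the $G_i$-direction; substituting $\kappa_{G_i}(e)=2/\diam(G_i)$ gives $\inf_e\kappa_G=\frac{2}{\deg(G)}\min_i\frac{\deg(G_i)}{\diam(G_i)}$, which equals $\frac{2}{\diam(G)}$ precisely when all the ratios $\deg(G_i)/\diam(G_i)$ agree, while self-centeredness of a product holds coordinatewise.

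For the converse I would first reduce to the case that $G$ is \emph{irreducible} with respect to the Cartesian product. The product formula above, combined with the Bonnet-Myers inequality applied to each factor (legitimate, since every factor inherits positive curvature), shows that if a Bonnet-Myers sharp graph decomposes then every factor is again Bonnet-Myers sharp and the ratios $\deg/\diam$ coincide; and a factor of a self-centered graph is self-centered. Inducting on the number of vertices, it therefore suffices to prove that the irreducible self-centered Bonnet-Myers sharp graphs are exactly $Q^1=K_2$, $CP(n)$ with $n\ge3$, $J(2n,n)$ with $n\ge3$, $Q^{2n}_{(2)}$ with $n\ge3$, and the Gosset graph.

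The core of the argument is to exploit Bonnet-Myers sharpness as a local rigidity statement. Tightness of the bound $\kappa\ge 2/\diam$ along a geodesic from a pole to its antipole forces the optimal transport plan between consecutive balls along that geodesic to be essentially unique, and unpacking the Kantorovich description of $\kappa$ then pins down, at each vertex and in each relevant direction, the numbers of neighbours lying closer to, equidistant from, and farther from the chosen pole; self-centeredness makes every vertex a pole, so these counts hold throughout $G$. I would push this to show that $G$ is distance-regular --- indeed $2$-antipodal, with each class of mutually antipodal vertices a pair --- with intersection numbers linear in the distance, so that $G$ has classical parameters $(D,1,\alpha,\beta)$ with $D=\diam(G)$, and moreover that $(\alpha,\beta,D)$ is tied to the degree through the sharpness equality $\kappa=2/D$. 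I would then invoke the classification of distance-regular graphs with classical parameters and $b=1$ (Brouwer--Cohen--Neumaier and subsequent work): imposing the extra constraint coming from Bonnet-Myers sharpness leaves exactly the Hamming graphs $H(D,2)=Q^D$, the Johnson graphs in the balanced case $J(2n,n)$, the demi-cubes $Q^{2n}_{(2)}$, the cocktail party graphs, and the Gosset graph, with the curvature-sharp local structure (in particular the forced shape of the $\mu$-graphs) used to discard the remaining feasible parameter sets and to settle the low-diameter cases directly.

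The step I expect to be the genuine obstacle is precisely this passage from tightness of the curvature bound along geodesics to distance-regularity with classical parameters: one has to show that the essentially unique transport plans on overlapping geodesics are mutually consistent and then translate that consistency into the exact intersection numbers, while simultaneously keeping the local $\mu$-graph constraints --- which are what ultimately separate the five families from their would-be impostors --- and the low-diameter exceptional behaviour around the Gosset graph under control. Everything else, including the reduction to irreducible graphs and the verification of the listed examples, is comparatively routine once the curvature product formula and the classification of distance-regular graphs with classical parameters $b=1$ are available.
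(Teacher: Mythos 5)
First, a point of orientation: the paper you are annotating does not prove this theorem at all. Theorem \ref{thm:bm_sc} is imported verbatim from \cite{rigidity}, where the proof occupies most of that (long) paper; so your proposal has to be measured against that argument. Your forward inclusion and your reduction to Cartesian-irreducible factors are essentially correct and match \cite{rigidity}: the curvature values $2/n$, $1$, $2/3$ for the five families are standard finite computations, the product formula $\kappa_G(e)=\tfrac{\deg(G_i)}{\deg(G)}\kappa_{G_i}(e)$ is indeed proved there, and the mediant argument showing that sharpness of a product forces sharpness of every factor together with equality of the ratios $\deg/\diam$ is sound, as is the observation that poles of a product are exactly the tuples of poles of the factors.

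The converse, however, has a genuine gap, and it is precisely the step you flag as "the genuine obstacle": passing from tightness of the curvature bound to distance-regularity with classical parameters. The identities \eqref{eq:recur1}--\eqref{eq:recur3} determine only $d^+-d^-$ and $2d^-+d^0$ at each distance $k$ from a pole; combined with $d^++d^0+d^-=D$ these amount to two independent constraints on three unknowns, so the in-degrees $c_k=d^-$ are \emph{not} pinned down by self-centeredness plus sharpness --- and indeed they differ among the target graphs ($c_k=k$ for $Q^n$, $c_k=k^2$ for $J(2n,n)$, $c_k=k(2k-1)$ for the demi-cubes), so no purely numerical argument can force them. Establishing constant intersection numbers from consistency of the essentially-unique transport plans is the entire content of the theorem, and you offer no mechanism for it. It is also not how \cite{rigidity} proceeds: there the authors prove that a self-centered Bonnet-Myers sharp graph is \emph{strongly spherical} (the graph and all of its intervals are antipodal) and then invoke a separate combinatorial classification of strongly spherical graphs, proved in the appendix of that paper; distance-regularity is never extracted directly from the curvature condition. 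Your endgame --- cutting down the $b=1$ classical-parameter list via $a_1=\#_\Delta(x,y)=\tfrac{2D}{L}-2$, which does correctly exclude $H(d,q)$ with $q>2$, the Doob graphs, unbalanced Johnson graphs and odd halved cubes --- would work \emph{if} distance-regularity with classical parameters were in hand, but as written the proposal substitutes a conjecture for the hard part of the proof.
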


It was also shown in \cite{rigidity} that the self-centeredness assumption is not necessary for Bonnet-Myers sharp graphs of diameter 2 and diameter equal to the degree. In this paper, we prove the following theorem that this assumption can also be dropped in case of diameter 3. Indeed, this result is a consequence of particular local properties of 3-balls around a pole of general Bonnet-Myers sharp graphs of arbitrary diameter (see Section \ref{sec:local} and \ref{sec:3ball}). We consider this approach as a starting point to potentially remove the self-centeredness condition for the general classification.

\begin{theorem} \label{thm:bm_diam3}
Bonnet-Myers sharp graphs of diameter 3 are self-centered, and hence are precisely the following graphs:
\begin{enumerate}
\item the cube $Q^3$;
\item the Johnson graph $J(6,3)$;
\item the demi-cube $Q^{6}_{(2)}$;
\item the Gosset graph.
\end{enumerate}
\end{theorem}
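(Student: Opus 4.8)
The plan is to reduce Theorem \ref{thm:bm_diam3} to Theorem \ref{thm:bm_sc} by showing that any Bonnet-Myers sharp graph $G$ of diameter $3$ must be self-centered, i.e.\ every vertex lies at distance $3$ from some other vertex. Since $\diam(G)=3$ and $G$ attains equality in \eqref{eq:BM_Ol_ineq}, the curvature lower bound is exactly $K=2/3$, and every edge $\{x,y\}$ with $d(x,y')=3$ for some $y'$ forces strong rigidity on the geodesic structure; in particular a vertex $p$ with an antipode $z$ at distance $3$ should be called a \emph{pole}. The strategy is: (1) show at least one pole exists (pick any diametral pair); (2) analyze the $3$-ball $B_3(p)$ around a pole using the local structural results promised in Sections \ref{sec:local} and \ref{sec:3ball}; (3) conclude that \emph{every} vertex of $G$ is a pole, so $G$ is self-centered; (4) invoke Theorem \ref{thm:bm_sc} and check which of the listed graphs have diameter $3$ — these are exactly $Q^3$, $J(6,3)$, $Q^6_{(2)}$, and the Gosset graph (the Cartesian products all have diameter $\ge 4$ unless they reduce to a single factor, and among the five base families only those four realize diameter $3$).

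For step (2), I would first use the Bonnet-Myers sharpness to pin down the distance-sphere sizes and the degree: if $p$ is a pole with antipode $z$, then combinatorial identities coming from the $\kappa=2/3$ condition on each edge of a geodesic from $p$ to $z$ (e.g.\ transport-plan arguments, or the $\Gamma$-calculus / Bakry-\'Emery style bounds that typically accompany these classifications) should give that $S_1(p)$, $S_2(p)$, $S_3(p)$ have prescribed cardinalities and that the induced subgraph on $S_1(p)$ is a strongly-regular-like "spherical" graph. The key local claim to extract from Section \ref{sec:3ball} is that the structure of $B_3(p)$ depends only on $p$ being a pole and not on the global graph, and in particular that $|S_3(p)|=1$, i.e.\ the antipode is unique. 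Then a counting/double-counting over all poles, together with the fact that $|S_3(p)|=1$ makes "being a pole" a symmetric relation that pairs up vertices, should propagate polehood: any vertex adjacent to a pole is itself a pole (because the rigidity of geodesics through the first pole forces the neighbor to sit at distance $3$ from something), and connectedness then finishes it.

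The main obstacle will be step (3): upgrading "some vertex is a pole" to "every vertex is a pole". The difficulty is that a priori $G$ could have vertices that are never an endpoint of a diameter-realizing geodesic, and the self-centeredness classification cannot be applied to such a graph directly. The resolution I anticipate is local-to-global: the detailed description of $B_3(p)$ around a pole $p$ (in particular, that the "second neighborhood" $S_2(p)$ and its attachment pattern to $S_1(p)$ match one of the four model graphs locally) forces $B_3(p)=G$ for dimension/counting reasons, so $G$ itself is one of the four models and is therefore self-centered — which also makes the hypothetical non-pole vertices impossible. Concretely I would argue: fix a pole $p$; show $|V(G)| = |B_3(p)|$ equals the order of one of the four candidate graphs; show the local adjacency data determines $G$ up to isomorphism (a rigidity/reconstruction argument from the $1$-ball and $2$-ball structure, which is where the earlier local lemmas do the heavy lifting); conclude. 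The routine parts — solving the resulting Diophantine/linear constraints on sphere sizes and degrees, and matching each solution to $Q^3$, $J(6,3)$, $Q^6_{(2)}$, or the Gosset graph — I will defer to the body of the paper.
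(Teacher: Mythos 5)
Your high-level reduction is the same as the paper's: prove that every vertex of a diameter-$3$ Bonnet-Myers sharp graph is a pole, then invoke Theorem \ref{thm:bm_sc} and read off the diameter-$3$ entries (your final list of four is correct, though your parenthetical about products is slightly off: $Q^1\times Q^2$ and $Q^1\times Q^1\times Q^1$ do have diameter $3$ and satisfy the ratio condition, but they are again $Q^3$). The problem is that the one step carrying all the mathematical content --- upgrading ``some vertex is a pole'' to ``every vertex is a pole'' --- is exactly the step you leave as a sketch, and neither of the two mechanisms you propose for it is workable as stated. Your first suggestion, that any vertex adjacent to a pole is itself a pole and that connectedness then propagates polehood, is never substantiated: you give no reason why rigidity at $p$ forces a neighbour of $p$ to be at distance $3$ from anything, and the paper proves no such neighbour-propagation statement. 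Your second suggestion --- reconstruct $G$ up to isomorphism from the local adjacency data of $B_3(p)$ and observe that it must be one of the four models --- would amount to redoing the entire classification from scratch rather than reducing to Theorem \ref{thm:bm_sc}, and you do not carry it out. (Also, $B_3(p)=G$ is automatic when $\diam(G)=3$, so ``forcing $B_3(p)=G$ for counting reasons'' is not where any difficulty lies.)

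What the paper actually does is more concrete. Since $[x_0,x_3]=V$ for an antipodal pair $x_0,x_3$ (cited from the rigidity paper), every vertex $y\notin\{x_0,x_3\}$ lies in $S_2(x_0)$ or in $S_2(x_3)$, so it suffices to show that for a pole $x_0$ and any $x_2\in S_2(x_0)\cap[x_0,x_3]$ there is a vertex at distance $3$ from $x_2$. This is done by choosing $x_1$ with $x_0\sim x_1\sim x_2$, taking good optimal transport maps $T_1,T_2$ along the extended geodesic, and pulling the antipole $w=x_3$ back to $v=T_2^{-1}(w)$ and $u=T_1^{-1}(v)$; the technical core (Theorem \ref{thm:uvw}, resting on all the local lemmas of Section \ref{sec:local}) then shows $u\in S_1(x_0)$ and $d(u,x_2)=3$. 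Nothing in your proposal anticipates this transport-geodesic pullback, and your guesses about the content of Sections \ref{sec:local} and \ref{sec:3ball} (prescribed sphere cardinalities, strong regularity of $S_1(p)$, uniqueness of the antipole as the key fact, Bakry-\'Emery bounds) do not match what is actually used. So the proposal identifies the right target but has a genuine gap at its centre.
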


\section*{Acknowledgement}
The author would like to thank Thai Institute for the Promotion of Teaching Science and Technology for providing scholarship for his PhD study. He also express his deep gratitude to Prof. Shiping Liu from University of Science of Technology in China (USTC) for generously hosting the author's academic visit in 2019 where this project started to take place. Last but not least, the author are specially grateful to Prof. Norbert Peyerimhoff for all his helpful suggestions to the paper and moral support during the Covid-19 lockdown period. 


\section{Definitions}

\subsection{Graph theoretical notation}
All graphs $G=(V,E)$, where $V$ is the set of vertices and $E$ is the set of edges, are assumed to be finite, simple (i.e., without multiple edges and loops) and connected.
We write $x\sim y$ if there exists an edge $\{x,y\}\in E$ between two different vertices $x$ and $y$ (and we may refer to this edge as $x\sim y$). Furthermore, we use the convention $x\simeq y$ to refer to ``$x\sim y$ or $x=y$''.  For a set of vertices $A\subseteq V$, the \emph{induced subgraph} $\Ind_G(A)$ is the subgraph of $G$ whose vertex set is $A$ and whose edge set consists of all edges in $G$ that have both endpoints in $A$.
For any two vertices $x$ and $y$, the \emph{combinatorial distance} $d(x, y)$ is the length (i.e., the number of edges) in a shortest path from $x$ to $y$. Such paths of minimal length are also called \emph{geodesics} from x to y. An \emph{interval} $[x,y]$ is the set of all vertices lying on geodesics from $x$ to $y$, that is
$$[x,y] = \{v\in V: \ d(x,v) + d(v,y) = d(x,y)\}.$$
By abuse of notation, we sometimes refer to $[x,y]$ as the induced subgraphs $\Ind_G([x,y])$. The \emph{diameter} of $G$ is $\diam(G) := \max_{x,y\in V} d(x,y)$. A vertex $x\in V$ is called
a \emph{pole} if there exists another vertex $y\in V$ such that $d(x, y) = \diam(G)$, in which case $y$ will be
called an \emph{antipole} of $x$ (with respect to $G$). A graph $G$ is called \emph{self-centered} if every vertex is a pole.

For $k\in \IN$, we define the $k$-sphere of a vertex $x$ as $S_k(x)=\{v:\ d(x,v)=k\}$ and the $k$-ball of $x$ as $B_k(x) = \{v:\ d(x, v) \le k\}$. In particular, $S_1(x)$ is the set of all neighbors of $x$, and we sometimes denote this set by $N(x)$. The \emph{vertex degree} of $x$ is the number of neighbors of $x$, $\deg(x):= |N(x)|$, and a graph $G$ is called $D$-\emph{regular} if all vertices have the same degree $D$. Furthermore, let $\#_\Delta(x)$ and $\#_\Delta(x,y)$ denote the numbers of triangles containing the vertex $x$, and containing the edge $x\sim y$, respectively.

For $x,y\in V$, we also define
\begin{eqnarray*}
d^{-}_{x}(y) &:=& \left| \{ v\in N(y): \ d(x,v)=d(x,y)-1 \} \right|;\\
d^{0}_{x}(y) &:=& \left| \{ v\in N(y): \ d(x,v)=d(x,y) \} \right|; \\
d^{+}_{x}(y) &:=& \left| \{ v\in N(y): \ d(x,v)=d(x,y)+1 \} \right|,
\end{eqnarray*}
which we respectively call the in-degree, spherical-degree, and out-degree of $y$ with respect to the reference vertex $x$.

\subsection{Ollivier Ricci curvature}

Ollivier Ricci curvature was originally defined in \cite{Ol09} to characterize positive Ricci curvature by the contraction property of $L^1$-Wasserstein distance between two small balls around two different points. In this paper, we focus on the modified definition of the curvature in case of regular graphs, introduced in \cite{LLY11} and explicitly stated in \cite[Definition 2.2]{rigidity}. Let us recall the following definitions which are relevant to this curvature notion.

\begin{definition}
Let $G = (V,E)$ be a graph. Let $\mu_{1},\mu_{2}$ be two probability measures on $V$. The \emph{Wasserstein distance} $W_1(\mu_{1},\mu_{2})$ between $\mu_{1}$ and $\mu_{2}$ is defined as
\begin{equation} \label{eq:W1def} 
W_1(\mu_{1},\mu_{2}):=\inf_{\pi \in \Pi(\mu_1,\mu_2)} {\rm cost}(\pi),
\end{equation}
where $\Pi(\mu_1,\mu_2)$ is the set of all \emph{transport plans} $\pi:V\times V \rightarrow [0,1]$ satisfying the marginal constraints
\begin{equation*}
\mu_{1}(x)=\sum_{y\in V}\pi(x,y) \quad \text{and} \quad 
\mu_{2}(y)=\sum_{x\in V}\pi(x,y),
\end{equation*}
and the \emph{cost} of $\pi$ is defined as $$ {\rm cost}(\pi) := \sum_{x \in V} \sum_{y \in V} d(x,y) \pi(x,y). $$
\end{definition}

The transportation plan $\pi$ in the above definition moves a mass
distribution $\mu_1$ into a mass distribution
$\mu_2$, where $\pi(x,y)$ represent the amount of mass moved from $x$ to $y$, and $W_1(\mu_1,\mu_2)$ is a measure for the minimal effort required for such transportation. If $\pi$ attains the infimum in \eqref{eq:W1def}, we call it an {\it optimal transport plan}
from $\mu_{1}$ to $\mu_{2}$. 

\begin{definition}[\cite{rigidity}]
Let $G=(V,E)$ be a $D$-regular graph. The Ollivier Ricci curvature is defined on an edge $x\sim y$ as
$$ \kappa(x,y) = \frac{D+1}{D} \left(1 - W_1(\mu_x,\mu_y)\right),$$
where $\mu_x$ is the uniform probability distribution on the 1-ball $B_1(x)$, defined by
\begin{eqnarray*}
\mu_x(v) := 
\begin{cases}
\frac{1}{D+1} \text{ for } v \in B_1(x),\\
0 \text{ otherwise}.
\end{cases}
\end{eqnarray*}
\end{definition}

\begin{remark} \label{rem:map_exists}
Due to the fact that all individual masses in $\mu_x$ and $\mu_y$ are equal to $\frac{1}{D+1}$, there always exist an optimal transport plan $\pi$ transporting from $\mu_x$ to $\mu_y$ without splitting masses. In other words, $\pi$ is induced by a bijective \emph{optimal transport map $T: B_1(x) \rightarrow B_1(y)$}, where for each vertex $v\in B_1(x)$, $T(v)$ is the corresponding vertex in $B_1(y)$ such that $\pi(v,T(v))=\frac{1}{D+1}$ (and $\pi(v,z)=0$ for all vertices $z$ other than $T(v)$). Moreover, we can assume without loss of generality that we do not need to move masses from origin that already lie in the destination; in other words, $T(v)=v$ for all $v\in B_1(x)\cap B_1(y)$ (see arguments in, e.g., \cite[Lemma 4.1]{Idle}).
\end{remark}

A crucial theorem regarding this curvature notion is the Bonnet-Myers type diameter bound theorem, which is presented in \cite[Theorem 4.1]{LLY11} and stated as follows.

\begin{theorem}[\cite{LLY11}]
Let $G=(V,E)$ be a regular graph with a positive lower bound of Ollivier Ricci curvature $K=\inf_{x\sim y} \kappa(x,y) >0$. Then the diameter of $G$ is bounded above by 
\begin{equation} \label{eq:Ol_Bonnet-Myers}
\diam(G) \le \frac{2}{K}.
\end{equation}
\end{theorem}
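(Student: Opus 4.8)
The plan is to bound the diameter by a two-sided estimate on a single Wasserstein distance. Let $x_0$ be a pole and $x_N$ an antipole, so that $N := d(x_0,x_N) = \diam(G)$, and fix a geodesic $x_0 \sim x_1 \sim \cdots \sim x_N$ joining them. I would study $W_1(\mu_{x_0},\mu_{x_N})$ from above (using the curvature hypothesis) and from below (using duality), and then compare.

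For the upper bound, rewriting the curvature definition gives, on each edge, $W_1(\mu_{x_i},\mu_{x_{i+1}}) = 1 - \tfrac{D}{D+1}\kappa(x_i,x_{i+1}) \le 1 - \tfrac{D}{D+1}K$. Applying the triangle inequality for $W_1$ along the geodesic and summing over its $N$ edges yields
\begin{equation*}
W_1(\mu_{x_0},\mu_{x_N}) \;\le\; \sum_{i=0}^{N-1} W_1(\mu_{x_i},\mu_{x_{i+1}}) \;\le\; N\left(1 - \frac{D}{D+1}K\right).
\end{equation*}
For the lower bound, the key is to use Kantorovich--Rubinstein duality rather than the minimal distance between supports. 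I would test against the $1$-Lipschitz function $f(v) := d(x_0,v)$, which gives $W_1(\mu_{x_0},\mu_{x_N}) \ge \IE_{\mu_{x_N}}[f] - \IE_{\mu_{x_0}}[f]$. A direct computation gives $\IE_{\mu_{x_0}}[f] = \tfrac{D}{D+1}$; and, writing $a := d^{-}_{x_0}(x_N)$ and noting that every neighbour of $x_N$ lies at distance $N-1$ or $N$ from $x_0$ (the value $N+1$ being excluded since $N$ is the diameter), one finds $\IE_{\mu_{x_N}}[f] = N - \tfrac{a}{D+1}$. Since $a \le D$, this yields
\begin{equation*}
W_1(\mu_{x_0},\mu_{x_N}) \;\ge\; N - \frac{a+D}{D+1} \;\ge\; N - \frac{2D}{D+1}.
\end{equation*}

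Combining the two estimates, the leading term $N$ cancels and I obtain $N\tfrac{D}{D+1}K \le \tfrac{2D}{D+1}$, i.e. $NK \le 2$, which is precisely $\diam(G) \le \tfrac{2}{K}$. I expect the main obstacle to be getting the \emph{sharp} constant on the lower bound. The naive estimate $W_1(\mu_{x_0},\mu_{x_N}) \ge N-2$, coming from the minimal distance between the supports $B_1(x_0)$ and $B_1(x_N)$, is too lossy and only produces the weaker bound $\diam(G) \le \tfrac{2(D+1)}{DK}$. The exact duality computation is what matters: the factor $\tfrac{2D}{D+1}$ appearing in the lower bound matches the factor $\tfrac{D}{D+1}$ carried by the curvature normalization in the upper bound, so that the dependence on $D$ cancels and the clean constant $2$ emerges. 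The only structural input used is regularity (to define the uniform measures $\mu_x$ and to evaluate $\IE_{\mu_{x_0}}[f]$) together with the fact that $N$ is maximal, so no further case analysis should be required.
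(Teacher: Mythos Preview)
Your argument is correct. The paper does not actually supply a proof of this theorem (it is quoted from \cite{LLY11}), but the computation appearing in the proof of Proposition~\ref{prop:transport_geo} is essentially the same idea carried out in the equality case: there one bounds the concatenated quantity
\[
A = W_1(\mathbbm{1}_{x_0},\mu_{x_0}) + \sum_{j=1}^{L} W_1(\mu_{x_{j-1}},\mu_{x_j}) + W_1(\mu_{x_L},\mathbbm{1}_{x_L})
\]
from below by $L$ (triangle inequality against the Dirac endpoints) and from above by $\tfrac{2D}{D+1} + L\bigl(1-\tfrac{D}{D+1}\cdot\tfrac{2}{L}\bigr)$ (curvature bound on each edge). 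Your use of Kantorovich--Rubinstein duality with $f=d(x_0,\cdot)$ to get $W_1(\mu_{x_0},\mu_{x_N}) \ge N - \tfrac{2D}{D+1}$ plays exactly the role of the two endpoint terms $W_1(\mathbbm{1}_{x_0},\mu_{x_0}) = W_1(\mu_{x_L},\mathbbm{1}_{x_L}) = \tfrac{D}{D+1}$ in the paper's computation, and your observation that $d^-_{x_0}(x_N)\le D$ replaces the exact evaluation \eqref{eq:W1_end}. The two routes are equivalent; yours is arguably more direct since it works with a single Wasserstein distance rather than a telescoping chain, while the paper's formulation with Dirac endpoints is what sets up the transport-geodesic rigidity statement needed later.
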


Any graph satisfying \eqref{eq:Ol_Bonnet-Myers} with equality is called \emph{Bonnet-Myers sharp} graph. An equivalent definition of Bonnet-Myers sharpness can also be seen as the rigidity condition: $\inf\limits_{x\sim y} \kappa(x,y) = \frac{2}{\diam(G)}$.

\section{Relevant results about Bonnet-Myers sharp graphs}

The first result from \cite[Theorem 5.8]{rigidity} gives the relation between in-degree, spherical-degree, and out-degree, with respect to a pole of a Bonnet-Myers sharp graph.
\begin{theorem} [\cite{rigidity}]
Let $G = (V, E)$ be a $D$-regular Bonnet-Myers sharp graph with diameter $L$. Let $x_0\in V$ be a pole and $y\in S_k(x_0)$. Then
\begin{eqnarray}
d^{+}(y) - d^{-}(y) &=& D\left(1-\frac{2k}{L}\right); \label{eq:recur1} \\
2d^{+}(y) + d^{0}(y) &=& 2D\left(1-\frac{k}{L}\right); \label{eq:recur2}\\
2d^{-}(y) + d^{0}(y) &=& \frac{2kD}{L} \label{eq:recur3},
\end{eqnarray}
where $d^{-},d^{0},d^{+}$ are in-degree, spherical-degree, and out-degree of $y$ with respect to $x_0$.
\end{theorem}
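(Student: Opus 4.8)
The plan is to first reduce the three identities to a single one. Since every neighbour of $y\in S_k(x_0)$ lies in $S_{k-1}(x_0)\cup S_k(x_0)\cup S_{k+1}(x_0)$ (by the triangle inequality, $|d(x_0,v)-k|\le d(v,y)=1$), we always have the trivial relation $d^{-}(y)+d^{0}(y)+d^{+}(y)=D$. One then checks that the left-hand sides of \eqref{eq:recur2} and \eqref{eq:recur3} sum to $2(d^{-}+d^{0}+d^{+})$ and differ by $2(d^{+}-d^{-})$, while the right-hand sides sum to $2D$ and differ by $2D(1-\tfrac{2k}{L})$. Hence the system \eqref{eq:recur1}--\eqref{eq:recur3} is equivalent to the trivial degree count together with the single identity \eqref{eq:recur1}, namely $d^{+}(y)-d^{-}(y)=D(1-\tfrac{2k}{L})$. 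Thus it suffices to prove \eqref{eq:recur1}.

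Next I would translate \eqref{eq:recur1} into a statement about averaging. Writing $\rho:=d(x_0,\cdot)$ and $Pf(v):=\sum_{w}f(w)\mu_v(w)=\frac{1}{D+1}\sum_{w\in B_1(v)}f(w)$, a direct count of the neighbours of $y$ by their distance to $x_0$ gives $P\rho(y)=\rho(y)+\frac{d^{+}(y)-d^{-}(y)}{D+1}$ for every $y$. Hence \eqref{eq:recur1} is equivalent to the discrete equation $P\rho(y)-\rho(y)=\frac{D}{D+1}\bigl(1-\tfrac{2\rho(y)}{L}\bigr)$. The upper bound "$\le$" then comes for free and globally: $\rho$ is $1$-Lipschitz, so by Kantorovich duality $P\rho(y)-P\rho(x)\le W_1(\mu_x,\mu_y)=1-\frac{D}{D+1}\kappa(x,y)\le 1-\frac{2D}{(D+1)L}$ for every edge $x\sim y$, using only $\kappa\ge\tfrac{2}{L}$. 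Substituting the formula for $P\rho$ on an \emph{ascending} edge (one with $\rho(y)=\rho(x)+1$) turns this into $\delta(y)-\delta(x)\le-\tfrac{2D}{L}$, where $\delta:=d^{+}-d^{-}$. Telescoping along any geodesic $x_0=w_0\sim\cdots\sim w_k=y$ and using $\delta(x_0)=D$ (all $D$ neighbours of $x_0$ are out-neighbours) yields $\delta(y)\le D(1-\tfrac{2k}{L})$ for every $y\in S_k(x_0)$.

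The reverse inequality is where rigidity enters. Fix an antipole $z_0$ of $x_0$ and a geodesic $x_0=v_0\sim\cdots\sim v_L=z_0$. Since $z_0$ is at maximal distance $L$ it has no out-neighbour, so $\delta(z_0)\ge-D$; telescoping the previous bound along this geodesic gives $\delta(z_0)-\delta(x_0)\le-2D$, i.e. $\delta(z_0)\le-D$. Comparing the two forces equality throughout, so $\delta(v_i)=D(1-\tfrac{2i}{L})$ for all $i$, and \eqref{eq:recur1} holds for every vertex lying on a geodesic from $x_0$ to an antipole (with the by-product $d^{-}(z_0)=D$, i.e. every antipole has all its neighbours in $S_{L-1}(x_0)$). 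The same equality analysis also pins down the optimal transport maps of Remark~\ref{rem:map_exists} on these edges, which is precisely the local rigidity the classified examples display.

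The main obstacle is the globalisation: promoting \eqref{eq:recur1} from vertices on an $x_0$-to-antipole geodesic to \emph{all} of $S_k(x_0)$. Concretely one must show that every $y\in S_k(x_0)$ lies on some geodesic from $x_0$ to an antipole of $x_0$, equivalently that $y$ can be joined to a vertex at distance $L$ from $x_0$ by an outward, distance-increasing path of length $L-k$. The delicate point is that the upper bound $\delta(y)\le D(1-\tfrac{2k}{L})$ does not by itself exclude a vertex $y$ with $k<L$ and $d^{+}(y)=0$ (a local maximum of $\rho$ strictly inside the graph), so one cannot naively walk outward, and the duality estimate gives only the far weaker lower bound $\delta(y)-\delta(x)\ge\tfrac{2D}{L}-2(D+1)$. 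I expect this step to require a genuinely global argument --- for instance propagating the rigid structure outward sphere by sphere from $x_0$, or inward from the clean antipole boundary condition $d^{-}(z_0)=D$, and using the sharp curvature bound on the edges incident to a putative interior local maximum to derive a contradiction. This extendability statement, rather than the transport estimates of the earlier steps, is the technical heart of the theorem.
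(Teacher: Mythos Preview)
The paper does not prove this theorem; it is quoted verbatim as \cite[Theorem~5.8]{rigidity} and used as a black box. So there is no ``paper's own proof'' to compare against here.

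That said, your outline is essentially the standard route, and the parts you do carry out are correct: the reduction of \eqref{eq:recur1}--\eqref{eq:recur3} to \eqref{eq:recur1} via $d^-+d^0+d^+=D$ is fine; the averaging identity $P\rho(y)=\rho(y)+\tfrac{\delta(y)}{D+1}$ is correct; the Kantorovich/curvature step $P\rho(y)-P\rho(x)\le 1-\tfrac{2D}{(D+1)L}$ on an edge, hence $\delta(y)-\delta(x)\le -\tfrac{2D}{L}$ on an ascending edge, is correct; and telescoping to an antipole $z_0$ together with $\delta(z_0)\ge -D$ forces equality along any $x_0$--$z_0$ geodesic.

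You have also correctly isolated the one genuine gap: passing from ``equality on $x_0$--antipole geodesics'' to ``equality for all $y\in S_k(x_0)$'' requires knowing that every vertex lies on such a geodesic, i.e.\ $[x_0,x_L]=V$. This is not something your transport/duality estimates by themselves yield. The good news is that this fact is exactly \cite[Theorem~5.5]{rigidity}, and the present paper invokes it independently (see Remark~\ref{rem:geo_ext} and the proof of Corollary~\ref{cor:bm3}). Once you plug in $[x_0,x_L]=V$, your telescoping argument finishes the proof of \eqref{eq:recur1} for every $y$, and hence of all three identities. The only caution is logical order: in \cite{rigidity} Theorem~5.5 precedes Theorem~5.8, so there is no circularity, but you should cite it rather than try to rederive it from the degree formulas you are proving.
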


In particular, the above theorem implies the following result about degrees of vertices in the 1-sphere and the 2-sphere around a pole.

\begin{corollary} \label{cor:recur}
Let $G = (V, E)$ be a $D$-regular Bonnet-Myers sharp graph with diameter $L$. Let $x_0\in V$ be a pole and $x_1\in S_1(x_0)$ and $x_2\in S_2(x_0)$.
Then
\begin{eqnarray}
&d^{+}(x_1) = D-\frac{2D}{L}+1; \qquad d^{0}(x_1)= \frac{2D}{L}-2; \label{eq:recur_S1} \\
&d^{+}(x_1)- d^{+}(x_2) = \frac{1}{2}d^{0}(x_2)+1 \label{eq:recur_S12}
\end{eqnarray}
\begin{proof} [Proof of Corollary \ref{cor:recur}]
The first two formulas follow from the fact that $d^{-}(x_1)=1$. Note also that \eqref{eq:recur2} gives $d^{+}(x_2) + \frac{1}{2}d^{0}(x_2) = D-\frac{2D}{L}=d^{+}(x_1)-1$, where the last formula follows immediately.
\end{proof}
\end{corollary}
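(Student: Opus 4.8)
The plan is to obtain both assertions as purely algebraic consequences of the recursion identities \eqref{eq:recur1}--\eqref{eq:recur3} of the preceding theorem, evaluated at the sphere levels $k=1$ and $k=2$ around the pole $x_0$. The only genuinely new input needed is the elementary observation that, for $x_1\in S_1(x_0)$, the only neighbour of $x_1$ lying at distance $0$ from $x_0$ is $x_0$ itself, and hence $d^-(x_1)=1$.

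First I would treat $x_1$, i.e.\ the case $k=1$. Substituting $d^-(x_1)=1$ into \eqref{eq:recur3} gives at once $d^0(x_1)=\frac{2D}{L}-2$, and then \eqref{eq:recur2} (equivalently, \eqref{eq:recur1}) forces $d^+(x_1)=D-\frac{2D}{L}+1$; the identity $d^-(x_1)+d^0(x_1)+d^+(x_1)=D$ serves as a consistency check. For the second formula I would evaluate \eqref{eq:recur2} at $k=2$: for $x_2\in S_2(x_0)$ it reads $d^+(x_2)+\frac{1}{2}d^0(x_2)=D-\frac{2D}{L}$, and since the right-hand side is exactly $d^+(x_1)-1$ by the formula just established, rearranging yields $d^+(x_1)-d^+(x_2)=\frac{1}{2}d^0(x_2)+1$.

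I do not expect any real obstacle: the corollary is bookkeeping on top of the recursion relations, with no case analysis or delicate estimate involved. The one point to keep in mind is the degenerate case $L=1$, in which $S_2(x_0)=\emptyset$ (so the second identity is vacuous) and the only Bonnet--Myers sharp example is $K_2=Q^1$, where $D=1$ and indeed $d^0(x_1)=d^+(x_1)=0$, in agreement with the formulas; for every $L\ge 2$ both spheres are non-empty and the argument above applies verbatim.
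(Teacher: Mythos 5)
Your argument is correct and is essentially identical to the paper's proof: both deduce the $k=1$ formulas from $d^-(x_1)=1$ via the recursion identities, and both obtain \eqref{eq:recur_S12} by evaluating \eqref{eq:recur2} at $k=2$ and comparing with $d^+(x_1)-1$. The extra remarks on consistency and the degenerate case $L=1$ are harmless but not needed.
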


In the previous corollary, the spherical degree $d^{0}_{x_0}(x_1)=\frac{2D}{L}-2$ can also be interpreted as $\#_\Delta(x_0,x_1)$, the number of triangles containing the edge $x_0\sim x_1$. In fact, the number of triangles containing any edge is bounded below by $\frac{2D}{L}-2$ as stated in the following proposition.

\begin{proposition} \label{prop:count_tri}
Let $G = (V, E)$ be a $D$-regular Bonnet-Myers sharp graph with diameter $L$. For any edge $x\sim y$, the number of triangles containing this edge is bounded below by $\#_\Delta(x,y) \ge \frac{2D}{L}-2$.
\end{proposition}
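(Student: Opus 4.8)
The plan is to convert the hypothesis of Bonnet-Myers sharpness into an upper bound on a Wasserstein distance, and then to bound that same Wasserstein distance from below in terms of the number of common neighbours of $x$ and $y$; comparing the two bounds will give the inequality.

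First I would set up the combinatorics of the $1$-balls. Fix an edge $x\sim y$ and write $m:=\#_\Delta(x,y)$; since a triangle through the edge $x\sim y$ corresponds exactly to a common neighbour of $x$ and $y$, we have $m=|N(x)\cap N(y)|$. Because $x\sim y$, both $x\in B_1(y)$ and $y\in B_1(x)$, so
$$B_1(x)\cap B_1(y)=\{x,y\}\cup\bigl(N(x)\cap N(y)\bigr),$$
a set of size $m+2$. Consequently $B_1(x)\setminus B_1(y)$ has exactly $(D+1)-(m+2)=D-1-m$ elements.

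Next I would produce the lower bound on $W_1(\mu_x,\mu_y)$. By Remark \ref{rem:map_exists} choose an optimal transport map $T\colon B_1(x)\to B_1(y)$ with $T(v)=v$ for all $v\in B_1(x)\cap B_1(y)$. For every $v\in B_1(x)\setminus B_1(y)$ we have $T(v)\in B_1(y)$ and hence $T(v)\neq v$, so $d(v,T(v))\ge 1$. Weighting by the uniform masses $\frac1{D+1}$ gives
$$W_1(\mu_x,\mu_y)=\sum_{v\in B_1(x)}\frac{1}{D+1}\,d\bigl(v,T(v)\bigr)\ \ge\ \frac{D-1-m}{D+1}.$$
On the other hand, Bonnet-Myers sharpness means $\kappa(x,y)\ge \frac{2}{L}$, i.e. $\frac{D+1}{D}\bigl(1-W_1(\mu_x,\mu_y)\bigr)\ge\frac2L$, which rearranges to $W_1(\mu_x,\mu_y)\le 1-\frac{2D}{(D+1)L}$. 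Combining the two estimates and multiplying through by $D+1>0$ yields $D-1-m\le (D+1)-\frac{2D}{L}$, that is, $m\ge \frac{2D}{L}-2$.

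I do not expect a genuine obstacle here; the proof is essentially a two-line comparison once the set-up is in place. The only points that need care are the appeal to Remark \ref{rem:map_exists} to justify that the optimal transport map fixes $B_1(x)\cap B_1(y)$ pointwise, and the elementary count $|B_1(x)\cap B_1(y)|=m+2$. One might wonder whether the crude bound $d(v,T(v))\ge 1$ loses too much, but since the resulting constant $\frac{2D}{L}-2$ is already attained on the edge $x_0\sim x_1$ for a pole $x_0$ (by the value $d^{0}_{x_0}(x_1)=\frac{2D}{L}-2$ in Corollary \ref{cor:recur}), no sharpening is necessary.
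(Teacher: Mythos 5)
Your proof is correct and takes essentially the same route as the paper: both rest on combining the curvature lower bound $\kappa(x,y)\ge \frac{2}{L}$ from sharpness with the upper bound $\kappa(x,y)\le \frac{2+\#_\Delta(x,y)}{D}$. The only difference is that the paper cites this latter inequality from \cite{rigidity}, whereas you re-derive it inline via the count $|B_1(x)\cap B_1(y)|=\#_\Delta(x,y)+2$ and the transport map of Remark \ref{rem:map_exists}, which makes your argument self-contained but not substantively different.
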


\begin{proof}[Proof of Proposition \ref{prop:count_tri}]
For any $D$-regular graph, there is a relation between Ollivier Ricci curvature and the number of triangles containing an edge $x\sim y$ given in, e.g., \cite[Proposition 2.7]{rigidity}: $$\kappa(x,y)\le \frac{2+\#_\Delta(x,y)}{D}.$$ Combined with the rigidity assumption \eqref{eq:Ol_Bonnet-Myers} that $\inf\limits_{x\sim y} \kappa(x,y) = \frac{2}{L}$, we have 
$$\#_\Delta(x,y) \ge D\cdot \kappa(x,y)-2 \ge \frac{2D}{L}-2.$$
\end{proof}


\section{Transport geodesics}

In this section, we recall a key concept of Bonnet-Myers sharp graphs, namely \emph{transport geodesics}, introduced in \cite{rigidity}.

Consider a $D$-regular Bonnet-Myers sharp graph $G=(V,E)$ with diameter $L$ and a pole $x_0$. Let us assume a full-length reference geodesic $\ell: x_0 \sim x_1 \sim x_2 \sim \cdots \sim x_L.$ For every $1 \le j \le L$, consider an optimal transport map
$T_j: B_1(x_{j-1}) \rightarrow B_1(x_{j})$ from $\mu_{x_{j-1}}$ to $\mu_{x_{j}}$ satisfying the following properties:
\begin{enumerate} [label=(P\arabic*)]
	\item \label{P1} $T_j: B_1(x_{j-1}) \rightarrow B_1(x_{j})$ is a bijection.
	\item \label{P2} $T_j(v)=v$ if and only if $v\in B_1(x_{j-1}) \cap B_1(x_{j})$.
\end{enumerate}
Such a map $T_j$ is called a \emph{good} optimal transport map. The existence of good maps $T_j$ is explained in Remark \ref{rem:map_exists} (but they are not necessarily unique a priori). 

Moreover, define maps $T^j: B_1(x_0) \rightarrow B_1(x_j)$ as the composition
$$T^j:= T_j \circ \cdots \circ T_1 \quad \forall 1 \le j \le L,$$
and $T^0$ is the identity map on $B_1(x_0)$ by convention. These maps $T_j$ are also bijections.

An important aspect of Bonnet-Myers sharpness is stated in the following proposition, which is in the same spirit as \cite[Propositions 7.1 and 7.2]{rigidity}.

\begin{proposition} \label{prop:transport_geo}
Let $G = (V,E)$ be a $D$-regular Bonnet-Myers sharp graph with diameter $L$. Given
a full-length reference geodesic $\ell$ and maps $T_j$ and $T^j$ defined as above. Then for every $z \in B_1(x_0)$, the vertices
$x_0, z,  T^1(z), T^2(z), ..., T^L(z), x_L$ lie along a geodesic, that is,
$$L=d(x_0,x_L)= d(x_0,z) + d(z,T^1(z)) + d(T^1(z), T^2(z)) +...+ d(T^{L-1}(z),T^L(z)) + d(T^L(z),x_L).$$ 
The sequence of vertices $x_0, z,  T^1(z), T^2(z), ..., T^L(z), x_L$  (which may contain repetitions) is called a \emph{\bf transport geodesic} along geodesic $\ell$.
\end{proposition}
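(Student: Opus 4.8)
The plan is to establish the geodesic property by an inductive unfolding argument based on the fundamental relationship between the Wasserstein distance along an edge and the curvature lower bound. The key observation is that Bonnet-Myers sharpness forces every edge $x_{j-1}\sim x_j$ of the reference geodesic to have curvature exactly $\frac{2}{L}$ (since $\kappa\ge\frac{2}{L}$ everywhere and the curvature along a geodesic cannot exceed this—otherwise the Lipschitz estimate for $d(x_0,\cdot)$ would be violated over the length-$L$ path), hence $W_1(\mu_{x_{j-1}},\mu_{x_j})=1-\frac{D}{D+1}\cdot\frac{2}{L}=1-\frac{2}{(D+1)L}$. Because the optimal transport map $T_j$ is good, it moves each of the $D+1$ unit masses (of size $\frac{1}{D+1}$) by a distance $d(v,T_j(v))\in\{0,1,2\}$, with distance $0$ exactly on the overlap $B_1(x_{j-1})\cap B_1(x_j)$. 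Writing out $\mathrm{cost}(T_j)=\frac{1}{D+1}\sum_v d(v,T_j(v))$ and equating to $W_1$ pins down exactly how much "displacement budget" is available at each step.

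First I would record the local step estimate: for any $z\in B_1(x_0)$ and any $1\le j\le L$, one has $d(x_0,T^j(z))-d(x_0,T^{j-1}(z))\le 1$, simply because $T_j$ maps into $B_1(x_j)$ and $x_j\in S_j(x_0)$, so $d(x_0,T^j(z))\le j$ while $d(x_0,T^{j-1}(z))\ge$ something controlled inductively; more precisely, I would track the quantity $d(x_0,T^{j}(z))$ and show it increases by at most $1$ at each step and by at most $1$ from $x_0$ to $z$ as well, giving $d(x_0,T^L(z))\le d(x_0,z)+L$ trivially, but the real content goes the other way. Next I would sum the triangle-inequality contributions: $d(x_0,x_L)\le d(x_0,z)+\sum_{j=1}^{L}d(T^{j-1}(z),T^j(z))+d(T^L(z),x_L)$ is one direction, and the reverse requires showing the right-hand side is also $\le L$. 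For this I would invoke the global accounting: summing the costs, $\sum_{j=1}^L W_1(\mu_{x_{j-1}},\mu_{x_j})=L-\frac{2L}{(D+1)L}=L-\frac{2}{D+1}$, while the total displacement $\sum_{j=1}^L\sum_{v\in B_1(x_0)}\frac{1}{D+1}d(T_{j}(T^{j-1}v),T^{j-1}v)$ must equal this; a parallel-transport comparison (as in the cited Propositions 7.1–7.2 of \cite{rigidity}) shows that if any single vertex $z$ failed the geodesic equality, the total transported mass-distance would strictly exceed the budget, a contradiction.

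The cleanest route to the reverse inequality, which I would pursue as the main line, is a "no-backtracking" lemma: along a good optimal transport map realizing curvature $\frac{2}{L}$, the composed images $x_0, z, T^1(z),\dots,T^L(z),x_L$ never decrease distance-to-$x_0$ by too much, and the total distance travelled by the $z$-particle plus the initial jump $d(x_0,z)$ plus the final jump $d(T^L(z),x_L)$ telescopes to exactly $L$ because the per-edge Wasserstein cost is exactly the "generic" value $1-\frac{2}{(D+1)L}$, leaving no slack for any mass to be transported along a non-geodesic route. Concretely, I would argue: the sum over all $D+1$ particles of their total travel is fixed; each particle travels at least $|d(x_0,x_L)-d(x_0,z_{\mathrm{start}})|$-worth in the relevant sense by the triangle inequality applied to $x_0$ and $x_L$; summing these lower bounds already accounts for the entire budget; therefore each individual lower bound is met with equality, which is precisely the asserted geodesic condition for every $z$.

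The main obstacle I anticipate is making the "summing local lower bounds exhausts the global budget" step fully rigorous—one must carefully verify that the per-particle lower bound, summed over all $D+1$ particles in $B_1(x_0)$ and all $L$ steps, reproduces exactly $L-\frac{2}{D+1}$ with no room to spare, which requires knowing precisely how many particles start at distance $0$ versus $1$ from $x_0$ and tracking the final jump $d(T^L(z),x_L)$ at the other end. This is where the structural input from Corollary \ref{cor:recur} on in-/out-/spherical degrees enters, since it controls the sizes of the overlap sets $B_1(x_{j-1})\cap B_1(x_j)$ and hence the exact number of "stationary" particles at each step. I would handle this by carefully bookkeeping the displacement at the two endpoints separately from the interior steps, mirroring the treatment in \cite[Propositions 7.1 and 7.2]{rigidity}, and then a global equality-case analysis forces the geodesic property simultaneously for all $z\in B_1(x_0)$.
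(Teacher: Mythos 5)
Your proposal follows essentially the same route as the paper: bound the total cost of the chain $\mathbbm{1}_{x_0}\to\mu_{x_0}\to\mu_{x_1}\to\cdots\to\mu_{x_L}\to\mathbbm{1}_{x_L}$ above by $L$ using the curvature lower bound and below by $L$ using the per-particle triangle inequality, then read off equality for each particle. Two small points: your value of the per-edge cost should be $1-\frac{2D}{(D+1)L}$ (so the middle terms sum to $L-\frac{2D}{D+1}$, which together with the two endpoint contributions of $\frac{D}{D+1}$ gives exactly $L$), and the bookkeeping you anticipate needing from Corollary \ref{cor:recur} about overlap sizes is unnecessary, since each particle's journey from $x_0$ to $x_L$ is already bounded below by $L$ outright.
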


\begin{proof} [Proof of Proposition \ref{prop:transport_geo}]

The idea is to calculate the cost of transportation through the sequence of probability measures $\mathbbm{1}_{x_0}, \mu_{x_0}, \mu_{x_1},...,\mu_{x_L}, \mathbbm{1}_{x_L}$. We define a positive real value
\begin{eqnarray*} 
A:= W_1(\mathbbm{1}_{x_0}, \mu_{x_0})+\sum_{j=1}^{L} W_1(u_{x_{j-1}},u_{x_j}) + W_1(\mu_{x_L}, \mathbbm{1}_{x_L}),
\end{eqnarray*}
and we inspect these three terms separately.
Firstly, observe that
\begin{eqnarray} \label{eq:W1_front}
W_1(\mathbbm{1}_{x_0}, \mu_{x_0})=\frac{1}{D+1}\sum_{z\in B_1(x_0)} d(x_0,z) = \frac{D}{D+1}.
\end{eqnarray}
Next, for each $1\le j \le L$, the distance $W_1(\mu_{x_{j-1}},\mu_{x_j})$ is equal to the cost of $T_j$ (since it is an optimal map), which can be reformulated as
\begin{eqnarray} \label{eq:W1_middles}
W_1(\mu_{x_{j-1}},\mu_{x_j})= \frac{1}{D+1} \sum\limits_{v\in B_1(x_{j-1})} d(v, T_j(v))= \frac{1}{D+1} \sum\limits_{z\in B_1(x_{0})} d(T^{j-1}(z), T^{j}(z)),
\end{eqnarray}
where the latter equality is due to labelling each $v$ by $T^{j-1}(z)$ since $T^{j-1}$ is bijective.

Lastly, since $T^{L}$ is also bijective, we have
\begin{eqnarray} \label{eq:W1_end}
W_1(\mu_{x_L}, \mathbbm{1}_{x_L}) =\frac{1}{D+1}\sum_{v\in B_1(x_L)} d(v, x_L) = \frac{1}{D+1}\sum_{z\in B_1(x_0)} d(T^{L}(z), x_L)  = \frac{D}{D+1}.
\end{eqnarray}

Combining \eqref{eq:W1_front}, \eqref{eq:W1_middles}, \eqref{eq:W1_end} and applying triangle inequality yields
\begin{eqnarray} \label{eq:bound_A}
A &=& W_1(\mathbbm{1}_{x_0}, \mu_{x_0})+\sum_{j=1}^{L} W_1(u_{x_{j-1}},u_{x_j}) + W_1(\mu_{x_L}, \mathbbm{1}_{x_L}) \nonumber \\
&=& \frac{1}{D+1} \sum_{z\in B_1(x_0)} \left( d(x_0,z)+ \sum_{j=1}^L d(T^{j-1}(z),T^{j}(z))+ d(T^{L}(z),x_L)\right) \nonumber \\
&\stackrel{\Delta}{\ge}& \frac{1}{D+1} \sum_{z\in B_1(x_0)} d(x_0,x_L) = L.
\end{eqnarray}

On the other hand, we have the rigidity assumption on \eqref{eq:Ol_Bonnet-Myers} that $\inf_{x\sim y} \kappa(x,y) =\frac{2}{L}$, which implies
\begin{equation} \label{eq:W1_middles_2}
W_1(\mu_{x_{j-1}},\mu_{x_j})=1-\frac{D}{D+1}\kappa(x_{j-1}, x_{j}) \le 1-\frac{D}{D+1}\cdot\frac{2}{L}.
\end{equation}
Combining \eqref{eq:W1_front}, \eqref{eq:W1_end}, \eqref{eq:W1_middles_2} gives
\begin{eqnarray*}
A &=& W_1(\mathbbm{1}_{x_0}, \mu_{x_0})+\sum_{j=1}^{L} W_1(u_{x_{j-1}},u_{x_j}) + W_1(\mu_{x_L}, \mathbbm{1}_{x_L}) \\
&\le& \frac{D}{D+1}+ \sum_{j=1}^{L} \left( 1-\frac{D}{D+1}\cdot\frac{2}{L} \right) + \frac{D}{D+1} = L.
\end{eqnarray*}
It follows that the triangle inequality in \eqref{eq:bound_A} holds with equality, which means
$$d(x_0,z)+ \sum_{j=1}^L d(T^{j-1}(z),T^{j}(z))+ d(T^{L}(z),x_L) = d(x_0,x_L) = L$$ for all $z\in B_1(x_0)$.

\end{proof}

The above proposition has specific reinterpretations as stated in following two corollaries, which will be more conveniently used later in the paper.

\begin{corollary} \label{cor:interval}
With the same setup as in Proposition \ref{prop:transport_geo}, let $v\in B_1(x_{j-1})$ be any vertex in the domain of the map $B_1(x_{j-1}) \stackrel{T_j}{\rightarrow} B_1(x_j)$.
Then $x_0, v, T_j(v)$ must lies on a geodesic, that is, $v\in [x_0, T_j(v)]$.
\end{corollary}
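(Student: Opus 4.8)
The plan is to read this off directly from Proposition~\ref{prop:transport_geo}. First I would use bijectivity: since $T^{j-1}:B_1(x_0)\to B_1(x_{j-1})$ is a bijection, any $v\in B_1(x_{j-1})$ equals $T^{j-1}(z)$ for a unique $z\in B_1(x_0)$, and then $T_j(v)=T_j\bigl(T^{j-1}(z)\bigr)=T^j(z)$ by definition of the composition. Hence the assertion $v\in[x_0,T_j(v)]$ is equivalent to $T^{j-1}(z)\in[x_0,T^j(z)]$, and it suffices to prove the latter for every $z\in B_1(x_0)$ and every $1\le j\le L$.

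Apply Proposition~\ref{prop:transport_geo} to $z$: the vertices $x_0,z,T^1(z),\dots,T^L(z),x_L$ lie along a geodesic from $x_0$ to $x_L$, so the corresponding distances sum to exactly $L=d(x_0,x_L)$. For any fixed $k$ with $0\le k\le L$, split this sum into the portion from $x_0$ to $T^k(z)$ and the portion from $T^k(z)$ to $x_L$; by the triangle inequality these two portions are at least $d(x_0,T^k(z))$ and at least $d(T^k(z),x_L)$ respectively, and since their total equals $L=d(x_0,x_L)\le d(x_0,T^k(z))+d(T^k(z),x_L)$, both inequalities must be equalities. In particular the portion from $x_0$ to $T^k(z)$ equals $d(x_0,T^k(z))$.

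Now apply this with $k=j-1$ and with $k=j$. Subtracting the two resulting identities, the difference of the portions telescopes to the single term $d\bigl(T^{j-1}(z),T^j(z)\bigr)$, giving $d(x_0,T^j(z))-d(x_0,T^{j-1}(z))=d\bigl(T^{j-1}(z),T^j(z)\bigr)$; equivalently $d(x_0,T^{j-1}(z))+d(T^{j-1}(z),T^j(z))=d(x_0,T^j(z))$, which is precisely $T^{j-1}(z)\in[x_0,T^j(z)]$, i.e.\ $v\in[x_0,T_j(v)]$. I do not anticipate any genuine obstacle: this is just the bookkeeping fact that a contiguous sub-chain of a geodesic is itself a geodesic, combined with the bijectivity from Remark~\ref{rem:map_exists} and the identity $T_j\circ T^{j-1}=T^j$. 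The only minor care needed is with the degenerate cases $k=0$ and $k=L$ (where one of the two portions reduces to the single term $d(x_0,z)$ or $d(T^L(z),x_L)$) and with keeping the index arithmetic straight.
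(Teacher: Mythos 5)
Your proof is correct and follows the same route as the paper, which simply invokes Proposition \ref{prop:transport_geo} with $v=T^{j-1}(z)$ and the bijectivity of $T^{j-1}$. You have merely made explicit the (correct) bookkeeping step that a contiguous sub-chain of a geodesic chain is itself geodesic, which the paper leaves implicit.
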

\begin{proof} [Proof of Corollary \ref{cor:interval}]
This is an immediate consequence of Proposition \ref{prop:transport_geo} with $v=T^{j-1}(z)$ by recalling that $T^{j-1}$ is bijective.
\end{proof}

\begin{corollary} \label{cor:moving} 
With the same setup as in Corollary \ref{cor:interval}, assume that $v\in S_m(x_0)$ and $T_j(v)\in S_n(x_0)$ for some $m,n\in \IN\cup\{0\}$. Then 
$$j-2 \le m\le n \le j+1,$$ and $n-m=d(v,T_j(v))$. In particular, 
\begin{enumerate}
\item $n=m$ if and only if $v=T_j(v)$;
\item $n=m+1$ if and only if $v\sim T_j(v)$.
\end{enumerate} 
\end{corollary}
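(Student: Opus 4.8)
The plan is to extract the claim directly from Corollary \ref{cor:interval} together with two one-sided distance estimates coming from the structure of optimal transport maps between $1$-balls. First I would record the basic observation that $T_j$ moves mass between $B_1(x_{j-1})$ and $B_1(x_j)$, and that $d(x_{j-1},x_j)=1$; hence for any $v\in B_1(x_{j-1})$ we have $d(v,T_j(v))\le d(v,x_{j-1})+d(x_{j-1},x_j)+d(x_j,T_j(v))\le 1+1+1$, but in fact property \ref{P2} together with the triangle-inequality-with-equality phenomenon will pin this down much more sharply. The key input is Corollary \ref{cor:interval}: $v\in[x_0,T_j(v)]$, i.e.\ $d(x_0,v)+d(v,T_j(v))=d(x_0,T_j(v))$. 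Writing $d(x_0,v)=m$ and $d(x_0,T_j(v))=n$, this identity is exactly $n-m=d(v,T_j(v))$, which is one of the two assertions; in particular $n\ge m$, giving the middle inequality $m\le n$.

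Next I would establish the outer bounds $j-2\le m$ and $n\le j+1$. For the upper bound: $T_j(v)\in B_1(x_j)$, so $n=d(x_0,T_j(v))\le d(x_0,x_j)+1=j+1$ (using that $\ell$ is a geodesic, so $d(x_0,x_j)=j$). For the lower bound: $v\in B_1(x_{j-1})$, so $m=d(x_0,v)\ge d(x_0,x_{j-1})-1=(j-1)-1=j-2$. Chaining these with $m\le n$ yields $j-2\le m\le n\le j+1$. Since $d(v,T_j(v))=n-m$ and $n,m$ are integers, the two itemized equivalences are then immediate: $n=m$ forces $d(v,T_j(v))=0$, i.e.\ $v=T_j(v)$, and conversely; similarly $n=m+1$ is equivalent to $d(v,T_j(v))=1$, i.e.\ $v\sim T_j(v)$ (here $v\ne T_j(v)$ since their distances to $x_0$ differ).

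I do not expect a genuine obstacle here — the corollary is essentially a bookkeeping consequence of Corollary \ref{cor:interval} plus the triangle inequality applied to the endpoints of $\ell$. The only point requiring a little care is making sure the geodesic identity from Corollary \ref{cor:interval} is applied with the correct base point $x_0$ (not $x_{j-1}$), and that one uses $d(x_0,x_{j-1})=j-1$ and $d(x_0,x_j)=j$, which hold because $\ell\colon x_0\sim x_1\sim\cdots\sim x_L$ is a full-length geodesic. If anything is subtle, it is only the implicit claim that the three cases $n\in\{m,m+1,\dots\}$ exhaust the possibilities compatibly with $v,T_j(v)\in B_1(x_{j-1})\cup B_1(x_j)$ being close; but this is subsumed by $n-m=d(v,T_j(v))$ and the fact that $v$ and $T_j(v)$ both lie within distance $1$ of the adjacent vertices $x_{j-1}\sim x_j$, forcing $d(v,T_j(v))\le 3$ and hence $n-m\le 3$ — though for the statement as written only $n-m\in\{0,1\}$ needs separate mention, which is exactly what the two items do.
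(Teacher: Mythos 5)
Your proposal is correct and follows essentially the same route as the paper: the outer bounds $j-2\le m$ and $n\le j+1$ come from $v\in B_1(x_{j-1})$ and $T_j(v)\in B_1(x_j)$ together with $d(x_0,x_{j-1})=j-1$, $d(x_0,x_j)=j$, while $m\le n$, the identity $n-m=d(v,T_j(v))$, and the two itemized equivalences all follow from Corollary \ref{cor:interval} exactly as you describe. No gaps.
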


\begin{proof} [Proof of Corollary \ref{cor:moving}]
Since $v\in B_1(x_{j-1})$ and $T_j(v)\in B_1(x_{j})$ as they lie in the domain and codomain of the map $T_j$, it is obvious that $j-2 \le m$ and $n\le j+1$. Moreover, since $v\in [x_0, T_j(v)]$ by Corollary \ref{cor:interval}, it means $$d(v,T_j(v))=d(x_0,T_j(v))-d(x_0,v)=n-m,$$ where the statements $1.$ and $2.$ immediately follow.
\end{proof}

In words, if vertices of $G$ are partitioned by layers $S_i(x_0)$ for $0 \le i\le L$ with respect to the pole $x_0$, the above corollary asserts that the transport map $T_j$ always sends a vertex $v$ to an outer layer (unless $v$ is fixed by the map $T_j$).

\begin{remark} \label{rem:geo_ext}
Instead of assuming a full-length reference geodesic $\ell$, let us start only with a geodesic $x_0\sim x_1 \sim x_2$ where $x_0$ is a pole and $x_1\in S_1(x_0)$ and $x_2\in S_2(x_0)$. Let $x_L$ be an antipole of $x_0$, that is, $d(x_0,x_L)=L$. It is asserted in \cite[Theorem 5.5]{rigidity} that the interval $[x_0,x_L]=V$, the set of all vertices of $G$. This immediately implies the uniqueness of the antipole $x_L$. Moreover, the fact that $v_2\in [x_0,x_L]$ implies that the geodesic $x_0\sim x_1 \sim x_2$ can always be extended to a full-length geodesic $x_0\sim x_1 \sim x_2 \sim x_3\sim \cdots \sim x_L$. This geodesic extension allows us to conveniently discuss good optimal transport maps $B_1(x_0)\stackrel{T_1}{\rightarrow} B_1(x_1) \stackrel{T_2}{\rightarrow} B_1(x_2)$ without having to define a full-length reference geodesic or other maps $T_3,...,T_L$.
\end{remark}


\section{Local properties of Bonnet-Myers sharp graphs} \label{sec:local}

In this section, we investigate local properties of a $D$-regular Bonnet-Myers sharp graph $G = (V,E)$ with diameter $L$. These properties are local in the sense that they only concern the induced subgraph of $B_3(x_0)$ of a pole $x_0$. To make it easier for readers to visualize, all of these properties are illustrated by diagrams. All diagrams display the induced subgraphs of those present vertices, that is, a pair of vertices are neighbors in $G$ if and only if they are connected by an edge in the diagram. Moreover, vertices in $S_1(x_0)$, $S_2(x_0)$, and $S_3(x_0)$ are color-coded by \textcolor{red}{red}, \textcolor{green}{green} and \textcolor{blue}{blue}, respectively.

The following two lemmas give information about intervals of length two; see Figure \ref{fig:CP_lemma}.

\begin{lemma} \label{lem:CP1}
Let $x_0$ be a pole. For any $x_1\in S_1(x_0)$ and $x_2\in S_2(x_0)$ such that $x_0\sim x_1 \sim x_2$, there exists a unique $\overline{x}_1\in S_1(x_0)$ such that $x_0\sim \overline{x}_1 \sim x_2$ and $\overline{x}_1\not\simeq x_1$. In other words, $[x_0,x_2]$ is a cocktail party graph (i.e., every vertex has exactly one antipole).

Furthermore, such $\overline{x}_1$ can be realized as $$\overline{x}_1=T_1^{-1}(x_2)=T_2(x_0),$$
where $B_1(x_0) \stackrel{T_1}{\rightarrow} B_1(x_1) \stackrel{T_2}{\rightarrow}  B_1(x_2)$ are any good optimal transport maps.
\end{lemma}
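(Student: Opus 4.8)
The plan is to exploit the transport-geodesic machinery from Section 4, specifically Corollary \ref{cor:interval} and Corollary \ref{cor:moving}, applied to good optimal transport maps along the geodesic $x_0\sim x_1\sim x_2$ (which exists and extends to full length by Remark \ref{rem:geo_ext}). First I would analyze the map $T_1\colon B_1(x_0)\to B_1(x_1)$. Since $x_2\in S_2(x_0)\cap S_1(x_1)=S_2(x_0)\cap B_1(x_1)$, the vertex $x_2$ lies in the codomain of $T_1$; let $\overline{x}_1:=T_1^{-1}(x_2)$. By property \ref{P2}, $x_2\notin B_1(x_0)$, so $\overline{x}_1\neq x_2$, and since $T_1$ restricted to $B_1(x_0)\cap B_1(x_1)$ is the identity, we get $\overline{x}_1\notin B_1(x_1)$, i.e. $\overline{x}_1\not\simeq x_1$. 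Now $\overline{x}_1\in B_1(x_0)$; by Corollary \ref{cor:moving} applied to $T_1$ with $v=\overline{x}_1$ and $T_1(v)=x_2\in S_2(x_0)$, we have $n-m = d(\overline{x}_1,x_2)$ where $m=d(x_0,\overline{x}_1)$, $n=2$; since $\overline{x}_1\not\sim x_1$ forces $d(\overline{x}_1,x_2)\ge 1$, and $\overline{x}_1\in B_1(x_0)$ forces $m\le 1$, the only consistent possibility is $m=1$, $d(\overline{x}_1,x_2)=1$. Hence $\overline{x}_1\in S_1(x_0)$, $\overline{x}_1\sim x_2$, and $\overline{x}_1\not\simeq x_1$, giving existence.

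Next I would identify $\overline{x}_1$ with $T_2(x_0)$. Consider $T_2\colon B_1(x_1)\to B_1(x_2)$. Since $x_0\in B_1(x_1)$ but $x_0\notin B_1(x_2)$ (as $d(x_0,x_2)=2$), the map $T_2$ moves $x_0$; by Corollary \ref{cor:moving} applied with $v=x_0\in S_0(x_0)$ we get $T_2(x_0)\in S_1(x_0)$ and $x_0\sim T_2(x_0)$, and by Corollary \ref{cor:interval}, $x_0\sim T_2(x_0)\sim x_2$ lie on a geodesic so $T_2(x_0)\sim x_2$. To see $T_2(x_0)=\overline{x}_1$, I would argue via the composition $T^2=T_2\circ T_1$: applying Proposition \ref{prop:transport_geo} (in the form of Corollary \ref{cor:interval} for the step $T_1$) to $z=\overline{x}_1$ shows $\overline{x}_1=T^1(\overline{x}_1)=T_1(\overline{x}_1)=x_2$... wait — rather, I track $z=x_0$: then $T^1(x_0)=T_1(x_0)=x_0$ (since $x_0\in B_1(x_0)\cap B_1(x_1)$), and $T^2(x_0)=T_2(x_0)$; the transport-geodesic property says $x_0,x_0,T_2(x_0),x_2$ lie on a geodesic, consistent with the above. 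For the identification I instead note that $T_2$ is a bijection and $x_2=T_2(x_2)$ is fixed; the vertex $T_2(x_0)$ is a neighbor of both $x_0$ and $x_2$ in $S_1(x_0)$, and I would show using the degree count from Corollary \ref{cor:recur} (which forces $d^0_{x_0}(x_1)=\frac{2D}{L}-2$ and pins down how many common neighbors $x_0$ and $x_2$ share in $S_1(x_0)$) that there is exactly one such vertex outside $B_1(x_1)$, namely $\overline{x}_1$; since $T_2(x_0)\not\simeq x_1$ would need to be checked — indeed $T_2(x_0)\in B_1(x_1)$ is impossible because $T_2$ fixes $B_1(x_1)\cap B_1(x_2)$ pointwise and is injective, so $T_2(x_0)$, being the image of $x_0\notin B_1(x_2)$, cannot lie in $B_1(x_1)\cap B_1(x_2)$ and cannot be a fixed point of $T_2$ lying in $B_1(x_1)$ either — forcing $T_2(x_0)=\overline{x}_1$.

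For uniqueness of $\overline{x}_1$ and the cocktail-party conclusion, I would suppose $x_1'\in S_1(x_0)$ satisfies $x_1'\sim x_2$ and $x_1'\not\simeq x_1$, and show $x_1'=\overline{x}_1$. The cleanest route is a counting argument: every neighbor of $x_2$ in $S_1(x_0)$ lies in $[x_0,x_2]$, and the induced subgraph on $[x_0,x_2]$ has all vertices at mutual distance $\le 2$; using Proposition \ref{prop:count_tri} (every edge is in at least $\frac{2D}{L}-2$ triangles) together with the exact spherical-degree formula $d^0_{x_0}(x_1)=\frac{2D}{L}-2$ from \eqref{eq:recur_S1}, one shows each $x_1$-type vertex in $[x_0,x_2]$ has exactly $\frac{2D}{L}-2$ neighbors among the other $x_1$-type vertices, and $x_2$ together with exactly two non-adjacent vertices of $S_1(x_0)$ — i.e. the structure of a cocktail party graph with $x_1,\overline{x}_1$ the unique non-edge. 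I expect the main obstacle to be this last step: pinning down the global structure of $[x_0,x_2]$ as a cocktail party graph (rather than merely producing one extra vertex $\overline{x}_1$) requires carefully combining the exact degree identities with the triangle lower bound, and ruling out the possibility that $x_2$ has three or more mutually-exclusion-free neighbors in $S_1(x_0)$; the transport-map construction gives existence cheaply, but the rigidity of the count is where the Bonnet-Myers sharpness must be used in full force.
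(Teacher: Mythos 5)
Your existence argument is correct and is essentially the paper's: set $\overline{x}_1:=T_1^{-1}(x_2)$, use property \ref{P2} to rule out $\overline{x}_1\in B_1(x_1)$ (hence also $\overline{x}_1=x_0$), and use Corollary \ref{cor:interval}/\ref{cor:moving} to place $\overline{x}_1$ in $S_1(x_0)$ adjacent to $x_2$. Likewise your observation that $T_2(x_0)$ is \emph{a} candidate (it lies in $S_1(x_0)$, is adjacent to $x_0$ and $x_2$, and cannot lie in $B_1(x_1)\cap B_1(x_2)$ by \ref{P2} and injectivity) is sound.

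The genuine gap is uniqueness, which you defer to an unexecuted counting argument and yourself flag as "the main obstacle." Without uniqueness you get neither the cocktail-party structure of $[x_0,x_2]$ nor the identification $T_1^{-1}(x_2)=T_2(x_0)$ (your argument for the latter explicitly invokes "there is exactly one such vertex outside $B_1(x_1)$," i.e.\ the very uniqueness you have not proved). The counting route is also in the wrong dependency order: the triangle-counting machinery in the paper (Lemma \ref{lem:CP_tight}) \emph{uses} Lemmas \ref{lem:CP1} and \ref{lem:CP2}, and the in-degree $d^-_{x_0}(x_2)$ is not pinned down by \eqref{eq:recur1}--\eqref{eq:recur3} alone (it depends on $d^0(x_2)$), so it is far from clear that Proposition \ref{prop:count_tri} plus \eqref{eq:recur_S1} rules out a second candidate. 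The paper's uniqueness argument is a short transport-map trick you are missing: given any candidate $v'\in S_1(x_0)$ with $v'\sim x_2$ and $v'\not\simeq x_1$, consider $u:=T_2^{-1}(v')\in B_1(x_1)$. Corollary \ref{cor:interval} forces $u\in[x_0,v']=\{x_0,v'\}$; but $u\neq v'$ since $u\simeq x_1$ while $v'\not\simeq x_1$, so $u=x_0$ and $v'=T_2(x_0)$. Since $T_2$ is a bijection this determines $v'$ uniquely and simultaneously yields $\overline{x}_1=T_2(x_0)$.
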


\begin{lemma} \label{lem:CP2}
Let $x_0$ be a pole. For $x_1, \overline{x}_1 \in S_1(x_0)$ such that $x_1 \not\simeq \overline{x}_1$, there exists a unique $x_2\in S_2(x_0)$ such that $x_1 \sim x_2 \sim \overline{x}_1$.
\end{lemma}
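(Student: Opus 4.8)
\textbf{Proof proposal for Lemma \ref{lem:CP2}.}

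The plan is to use Lemma \ref{lem:CP1} to produce the vertex $x_2$ and then argue uniqueness separately. First I would pick any $x_2' \in S_2(x_0)$ with $x_1 \sim x_2'$ (this exists because $x_1 \in S_1(x_0)$ has positive out-degree $d^+(x_1) = D - \frac{2D}{L} + 1 > 0$ by \eqref{eq:recur_S1}, using $L \ge 3$). Applying Lemma \ref{lem:CP1} to the geodesic $x_0 \sim x_1 \sim x_2'$ yields a unique $\overline{x}_1' \in S_1(x_0)$ with $x_0 \sim \overline{x}_1' \sim x_2'$ and $\overline{x}_1' \not\simeq x_1$. If $\overline{x}_1' = \overline{x}_1$ we are done (take $x_2 := x_2'$). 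The issue is that a priori $\overline{x}_1'$ could be a different non-neighbour of $x_1$, so I need to show every non-neighbour of $x_1$ in $S_1(x_0)$ arises this way, i.e. that $x_1$ has a \emph{common} neighbour in $S_2(x_0)$ with each of its non-neighbours in $S_1(x_0)$.

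To set this up cleanly, I would count. Let $N_1 := \{w \in S_1(x_0) : w \not\simeq x_1\}$; by Lemma \ref{lem:CP1} (the cocktail-party / "exactly one antipole" statement applied inside each $[x_0,x_2']$), the map sending a neighbour $x_2'$ of $x_1$ in $S_2(x_0)$ to its partner $\overline{x}_1'$ is well-defined, and its image lies in $N_1$; the content of Lemma \ref{lem:CP1} is moreover that for each such $x_2'$ this partner is the \emph{unique} element of $S_1(x_0) \setminus \{x_1\}$ non-adjacent to $x_1$ lying in $[x_0,x_2']$ — but there could be \emph{other} vertices of $S_1(x_0)$ adjacent to $x_2'$. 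The key structural fact I want is that the number of neighbours of $x_1$ in $S_2(x_0)$ equals $|S_1(x_0)| - d^0(x_1) - 2$: indeed $S_1(x_0) = \{x_1\} \cup (\text{neighbours of } x_1 \text{ in } S_1) \cup N_1$, and the realization $\overline{x}_1 = T_2(x_0)$ in Lemma \ref{lem:CP1} together with the fact (Corollary \ref{cor:moving}, part 2, and the good-map property \ref{P2}) that $T_1$ moves $x_0$'s neighbours to adjacent vertices of $S_2(x_0)$, should give that the assignment $x_2' \mapsto \overline{x}_1'$ is in fact a \emph{bijection} between the out-neighbourhood $\{x_2' \in S_2(x_0): x_1 \sim x_2'\}$ and $N_1$. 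Granting that bijection, $x_2$ exists: it is the preimage of $\overline{x}_1$.

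For uniqueness of $x_2$, suppose $x_2, x_2'' \in S_2(x_0)$ both satisfy $x_1 \sim x_2, x_2'' \sim \overline{x}_1$ with $x_2 \ne x_2''$. Apply Lemma \ref{lem:CP1} to $x_0 \sim \overline{x}_1 \sim x_2$ and to $x_0 \sim \overline{x}_1 \sim x_2''$: in both cases the unique $S_1$-partner of $\overline{x}_1$ is forced to be $x_1$ (since $x_1 \not\simeq \overline{x}_1$, $x_1 \sim x_2$, $x_1 \sim x_2''$, and the partner is unique). But then $x_1$ and $\overline{x}_1$ are a non-adjacent pair inside both $[x_0,x_2]$ and $[x_0,x_2'']$, each of which is a cocktail party graph; I would derive a contradiction by showing this forces $x_2 = x_2''$ — e.g. via the realization $x_2 = T_1^{-1}$-type formula, or by noting that $x_1$'s out-neighbour in a transport geodesic through $\overline{x}_1$ is uniquely determined.

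\textbf{Main obstacle.} The delicate point is establishing the bijectivity in the second paragraph: Lemma \ref{lem:CP1} as stated only gives, for a \emph{fixed} $x_2'$, a unique partner $\overline{x}_1'$, and conversely embeds $\overline{x}_1'$ into $[x_0,x_2']$, but it does not immediately rule out two distinct out-neighbours $x_2', x_2''$ of $x_1$ sharing the same partner, nor does it immediately identify $|N_1|$ with the out-degree count. I expect to resolve this by invoking the transport-map realizations $\overline{x}_1 = T_1^{-1}(x_2) = T_2(x_0)$ from Lemma \ref{lem:CP1}: since $T_1$ and $T_2$ are bijections, $x_2' \mapsto \overline{x}_1' = T_1^{-1}(x_2')$ is injective on any codomain where $T_1$ is defined, giving the injectivity half; surjectivity onto $N_1$ then follows by a dimension/cardinality count using $d^0(x_0,x_1) = \frac{2D}{L} - 2$ from Corollary \ref{cor:recur} and $|S_1(x_0)| = D$. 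If that count does not close on the nose, the fallback is to run the whole argument symmetrically in $x_1 \leftrightarrow \overline{x}_1$ and intersect the two interval structures.
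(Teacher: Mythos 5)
Your argument is correct, but the existence half takes a genuinely longer route than the paper's. The paper's existence proof is a direct computation: take a good map $T_1:B_1(x_0)\to B_1(x_1)$ and look at $T_1(\overline{x}_1)$; since $\overline{x}_1\notin B_1(x_0)\cap B_1(x_1)$, property \ref{P2} gives $T_1(\overline{x}_1)\neq\overline{x}_1$, Corollary \ref{cor:moving} (with $m=j=1$) then places $T_1(\overline{x}_1)$ in $S_2(x_0)$ adjacent to $\overline{x}_1$, and $T_1(\overline{x}_1)\in B_1(x_1)\cap S_2(x_0)$ forces $T_1(\overline{x}_1)\sim x_1$ --- no counting needed. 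Your detour via the bijection $x_2'\mapsto T_1^{-1}(x_2')$ does work: injectivity is immediate from $T_1$ being a bijection, and the count closes exactly, since $|N_1|=D-1-d^0(x_1)=D-\tfrac{2D}{L}+1=d^+(x_1)$ by \eqref{eq:recur_S1}. But note that the surjectivity you extract from this count is precisely the statement that $T_1$ sends each $w\in N_1$ to an out-neighbour of $x_1$, which the direct argument establishes without invoking Corollary \ref{cor:recur} at all; so the counting buys you nothing here, at the cost of an extra dependency. Your uniqueness argument is essentially the paper's with the roles of $x_1$ and $\overline{x}_1$ swapped: the point you left hanging ("derive a contradiction forcing $x_2=x_2''$") is settled by fixing one good map $\overline{T}_1:B_1(x_0)\to B_1(\overline{x}_1)$ and noting that Lemma \ref{lem:CP1} holds for \emph{any} good map, so the same $\overline{T}_1$ gives $x_2=\overline{T}_1(x_1)=x_2''$. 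Two minor points: positivity of $d^+(x_1)$ needs only $L\ge 2$, not $L\ge 3$ (the lemma is stated for arbitrary diameter); and $T_1$ does not move \emph{all} of $x_0$'s neighbours into $S_2(x_0)$ --- only those outside $B_1(x_1)$, the rest being fixed by \ref{P2} --- though this does not affect your argument since you only apply the map to vertices of $N_1$.
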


\begin{figure}[h!]
\begin{subfigure}{.5\textwidth}
\centering
\begin{tikzpicture} [scale=1.5]
\draw[fill] (0,0) circle [radius=0.025]; 
\draw[fill,red] (1,0) circle [radius=0.025]; 
\draw[fill,red] (1,1) circle [radius=0.025]; 
\draw[fill,green] (2,0) circle [radius=0.025];
\draw (0,0)--(1,0)--(2,0)--(1,1)--(0,0);
\node[below] at (0,0) {$x_0$};
\node[below,red] at (1,0) {$x_1$};
\node[above] at (1,1) {$\exists! \ \textcolor{red}{\overline{x}_1}$};
\node[below,green] at (2,0) {$x_2$};
\end{tikzpicture}
\caption{Illustration of Lemma \ref{lem:CP1}}
\label{fig:CP_lemma}
\end{subfigure}
\begin{subfigure}{.5\textwidth}
\centering
\begin{tikzpicture} [scale=1.5]
\draw[fill] (0,0) circle [radius=0.025]; 
\draw[fill,red] (1,0.5) circle [radius=0.025]; 
\draw[fill,red] (1,-0.5) circle [radius=0.025]; 
\draw[fill,green] (2,0) circle [radius=0.025];

\draw (0,0)--(1,0.5)--(2,0)--(1,-0.5)--(0,0);
\node[below] at (0,0) {$x_0$};
\node[below,red] at (1,-0.5) {$x_1$};
\node[above,red] at (1,0.5) {$\overline{x}_1$};
\node[right] at (2,0) {$\exists! \ \textcolor{green}{x_2}$};
\end{tikzpicture}
\caption{Illustration of Lemma \ref{lem:CP2}}
\end{subfigure}

\caption{Two figures illustrating Lemma \ref{lem:CP1} and \ref{lem:CP2}.}
\label{fig:CP_lemma}
\end{figure}
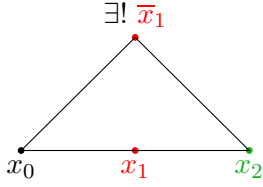
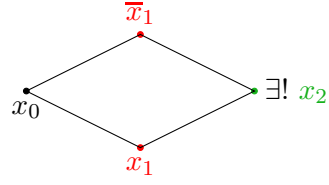

\begin{proof}[Proof of Lemma \ref{lem:CP1}]
Let $B_1(x_0)\stackrel{T_1}{\rightarrow} B_1(x_1) \stackrel{T_2}{\rightarrow} B_1(x_2)$ be any good optimal transport maps. Consider the vertex $v:=T_1^{-1}(x_2)\in B_1(x_0)$. We firstly check that $v$ is a candidate for $\overline{x}_1$. Note that $v\not=x_0$ because $T_1(x_0)=x_0$ by property \ref{P2}. Thus $v\in S_1(x_0)$. Combining with the fact that $v\in[x_0,x_2]$ due to Corollary \ref{cor:interval}, we have $x_0\sim v\sim x_2$. Furthermore, $v\not\simeq x_1$, because otherwise $v\in B_1(x_1) \cap B_1(x_0)$ would imply that $x_2=T_1(v)=v$ by property \ref{P2}, which is impossible. Hence $v=T_1^{-1}(x_2)$ is indeed a candidate for $\overline{x}_1$.
 
To prove uniqueness of such $\overline{x}_1$, assume that a vertex $v'\in S_1(x_0)$ satisfies $x_0\sim v' \sim x_2$ and $v'\not\simeq x_1$. Since $v'\in B_1(x_2)$, we can consider the vertex $u:=T_2^{-1}(v') \in B_1(x_1)$. By Corollary \ref{cor:interval}, $u\in [x_0,v']$, so either $u=x_0$ or $u=v'$. Note that $u\not=v'$ (because $u\simeq x_1$ but $v'\not\simeq x_1$). Therefore, $u=x_0$, which means $v'=T_2(u)=T_2(x_0)$ is uniquely determined as the image of $x_0$ under the bijection $T_2$. 

We can also conclude from  arguments in both paragraphs above that $$\overline{x}_1=T_1^{-1}(x_2)=T_2(x_0).$$
\end{proof}

\begin{proof}[Proof of Lemma \ref{lem:CP2}]
Let $B_1(x_0)\stackrel{T_1}{\rightarrow} B_1(x_1)$ be a good optimal transport map. Consider the vertex $T_1(\overline{x}_1) \in B_1(x_1)$. 
Firstly, note that $T_1(\overline{x}_1)\not=\overline{x}_1$ due to property \ref{P2} since $\overline{x}_1\not\in B_1(x_0)\cap B_1(x_1)$. Corollary \ref{cor:moving} (with $m=j=1$) then implies that $T_1(\overline{x}_1) \in S_2(x_0)$ and that $T_1(\overline{x}_1) \sim \overline{x}_1$. Now we know that $T_1(\overline{x}_1) \in B_1(x_1)\cap S_2(x_0) \subseteq S_1(x_1)$, which means $T_1(\overline{x}_1) \sim x_1$. Therefore, $x_1 \sim T_1(\overline{x}_1) \sim \overline{x}_1$, that is, $T_1(\overline{x}_1)$ is a candidate for $x_2$.

On the other hand, assume $v\in S_2(x_0)$ such that $x_1 \sim v \sim \overline{x}_1$. The preimage $T_1^{-1}(v)$ must coincide with $\overline{x}_1$ due to Lemma \ref{lem:CP1}. Thus $v=T_1(\overline{x}_1)$. This proves the uniqueness of $x_2$.
\end{proof}

\begin{lemma} \label{lem:XOR}
Let $x_0,x_1,x_2,\overline{x}_1$ be defined as in Lemma \ref{lem:CP1}. Let $y\in S_2(x_0)$ such that $y\sim x_2$. Then $y\sim x_1$ if and only if $y\not\sim \overline{x}_1$.
\end{lemma}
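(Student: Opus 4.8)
The plan is to recast the claim as a partition statement and prove it by a triangle count. Since the hypothesis $y\sim x_2$ already forces $y\neq x_2$, it says exactly that $y$ belongs to $Q:=N(x_2)\cap S_2(x_0)$, and the asserted equivalence ``$y\sim x_1$ iff $y\not\sim\overline{x}_1$'' is just the statement that every vertex of $Q$ is adjacent to exactly one of $x_1,\overline{x}_1$, i.e. that $Q=\bigl(Q\cap N(x_1)\bigr)\sqcup\bigl(Q\cap N(\overline{x}_1)\bigr)$ is a \emph{disjoint} union.

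Disjointness is the easy half (and yields the ``only if'' direction on its own): a vertex $w\in Q\cap N(x_1)\cap N(\overline{x}_1)$ would lie in $S_2(x_0)$ with $x_1\sim w\sim\overline{x}_1$, so Lemma \ref{lem:CP2} would force $w=x_2$, contradicting $w\sim x_2$.

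For the ``at least one'' half I would bound $|Q\cap N(x_1)|$ from below via the triangle count on the edge $x_1\sim x_2$. Every common neighbour of $x_1$ and $x_2$ lies in $S_1(x_0)\cup S_2(x_0)$: being adjacent to $x_1\in S_1(x_0)$ it is within distance $1$ of that sphere (so not in $S_3(x_0)$), and being adjacent to $x_2$ it is not $x_0$. Hence
$$\#_\Delta(x_1,x_2)=\bigl|S_1(x_0)\cap N(x_1)\cap N(x_2)\bigr|+|Q\cap N(x_1)|.$$
By Lemma \ref{lem:CP1} the interval $[x_0,x_2]$ is a cocktail party graph with $\{x_1,\overline{x}_1\}$ an antipodal pair, so inside $[x_0,x_2]$ the vertex $x_1$ is adjacent to everything but $\overline{x}_1$; restricting to $S_1(x_0)$ gives $\bigl|S_1(x_0)\cap N(x_1)\cap N(x_2)\bigr|=d^{-}(x_2)-2$, where $d^{-},d^{0}$ denote in-degree and spherical-degree with respect to $x_0$. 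Now Proposition \ref{prop:count_tri} ($\#_\Delta(x_1,x_2)\ge \frac{2D}{L}-2$) together with \eqref{eq:recur3} at $k=2$ (which rearranges to $\frac{2D}{L}-d^{-}(x_2)=\tfrac12 d^{0}(x_2)$) yields $|Q\cap N(x_1)|\ge\tfrac12 d^{0}(x_2)$. Running the identical computation with the roles of $x_1$ and $\overline{x}_1$ exchanged—legitimate because $x_1$ is precisely the ``$\overline{x}_1$'' that Lemma \ref{lem:CP1} attaches to the pair $(\overline{x}_1,x_2)$—gives $|Q\cap N(\overline{x}_1)|\ge\tfrac12 d^{0}(x_2)$.

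Finally I would assemble the pieces: $|Q|=d^{0}(x_2)\ge |Q\cap N(x_1)|+|Q\cap N(\overline{x}_1)|\ge d^{0}(x_2)$, so all inequalities are equalities and $Q=\bigl(Q\cap N(x_1)\bigr)\sqcup\bigl(Q\cap N(\overline{x}_1)\bigr)$, which is the lemma. The only point I expect to need real care is the triangle count: one must make sure that no common neighbour of $x_1$ and $x_2$ escapes $S_1(x_0)\cup S_2(x_0)$ and that the cocktail party structure of $[x_0,x_2]$ pins the $S_1(x_0)$-part of that count to exactly $d^{-}(x_2)-2$; everything after that is routine manipulation of the degree relations.
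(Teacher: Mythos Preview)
Your proof is correct, and the overall architecture---show that $Q\cap N(x_1)$ and $Q\cap N(\overline{x}_1)$ are disjoint, bound each from below by $\tfrac12 d^0(x_2)$, then compare with $|Q|=d^0(x_2)$---matches the paper's. The genuine difference is in how the lower bound $|Q\cap N(x_1)|\ge\tfrac12 d^0(x_2)$ is obtained. The paper argues directly with the good optimal transport map $T_2:B_1(x_1)\to B_1(x_2)$: setting $A_3=\{v\in S_2(x_0):v\sim x_1,\ v\not\simeq x_2\}$ and $A_4=S_3(x_0)\cap N(x_2)$, it shows $T_2(A_3)\subseteq A_4$, whence $|A_3|\le d^+(x_2)$ and then $|Q\cap N(x_1)|=d^+(x_1)-1-|A_3|\ge d^+(x_1)-d^+(x_2)-1=\tfrac12 d^0(x_2)$ via \eqref{eq:recur_S12}. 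You instead count triangles on the edge $x_1\sim x_2$: using the cocktail-party structure of $[x_0,x_2]$ from Lemma~\ref{lem:CP1} to pin down the $S_1(x_0)$-contribution as $d^-(x_2)-2$, and then invoking Proposition~\ref{prop:count_tri} together with \eqref{eq:recur3}.

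Your route is arguably more elementary, since it draws only on facts already packaged as lemmas (Lemma~\ref{lem:CP1}, Proposition~\ref{prop:count_tri}) rather than reopening the transport machinery. On the other hand, the paper's route yields a useful byproduct: once all inequalities are forced to equalities, one gets $T_2(A_3)=A_4$, and this equality is explicitly reused in the proof of Lemma~\ref{lem:prop_T2}(a). Your argument does not produce this identity, so if you adopted your proof in place of the paper's you would need to supply $T_2(A_3)=A_4$ separately when it is called upon later.
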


\begin{figure}[h!]
\centering
\begin{tikzpicture} [scale=1.2]

\draw[fill] (0,0) circle [radius=0.025]; 
\draw[fill,red] (1,0.5) circle [radius=0.025];
\draw[fill,red] (1,-0.5) circle [radius=0.025];
\draw[fill,green] (2,0) circle [radius=0.025];
\draw[fill,green] (2,1) circle [radius=0.025];
\draw (2,1)--(2,0)--(1,0.5)--(0,0)--(1,-0.5)--(2,0);
\draw[darkgray,dashed] (1,0.5)--(2,1)--(1,-0.5);
\node[below] at (0,0) {$x_0$};
\node[below,red] at (1,-0.5) {$x_1$};
\node[above,red] at (1,0.5) {$\overline{x}_1$};
\node[below,green] at (2,0) {$x_2$};
\node[right,green] at (2,1) {$y$};

\draw[fill] (0+5,0+1) circle [radius=0.025]; 
\draw[fill,red] (1+5,0.5+1) circle [radius=0.025];
\draw[fill,red] (1+5,-0.5+1) circle [radius=0.025];
\draw[fill,green] (2+5,0+1) circle [radius=0.025];
\draw[fill,green] (2+5,1+1) circle [radius=0.025];
\draw (2+5,1+1)--(2+5,0+1)--(1+5,0.5+1)--(0+5,0+1)--(1+5,-0.5+1)--(2+5,0+1);
\draw (1+5,0.5+1)--(2+5,1+1);
\node[below] at (0+5,0+1) {$x_0$};
\node[below,red] at (1+5,-0.5+1) {$x_1$};
\node[above,red] at (1+5,0.5+1) {$\overline{x}_1$};
\node[below,green] at (2+5,0+1) {$x_2$};
\node[right,green] at (2+5,1+1) {$y$};

\draw[fill] (0+5,0-1.5) circle [radius=0.025]; 
\draw[fill,red] (1+5,0.5-1.5) circle [radius=0.025];
\draw[fill,red] (1+5,-0.5-1.5) circle [radius=0.025];
\draw[fill,green] (2+5,0-1.5) circle [radius=0.025];
\draw[fill,green] (2+5,1-1.5) circle [radius=0.025];
\draw (2+5,1-1.5)--(2+5,0-1.5)--(1+5,0.5-1.5)--(0+5,0-1.5)--(1+5,-0.5-1.5)--(2+5,0-1.5);
\draw (2+5,1-1.5)--(1+5,-0.5-1.5);
\node[below] at (0+5,0-1.5) {$x_0$};
\node[below,red] at (1+5,-0.5-1.5) {$x_1$};
\node[above,red] at (1+5,0.5-1.5) {$\overline{x}_1$};
\node[below,green] at (2+5,0-1.5) {$x_2$};
\node[right,green] at (2+5,1-1.5) {$y$};

\draw[ultra thick, ->] (2.5,0)--(4.5,1);
\draw[ultra thick, ->] (2.5,-0.2)--(4.5,-1.5);
\node at (3.5,0) {XOR};
\end{tikzpicture}
\caption{Figure illustrating Lemma \ref{lem:XOR}. Dashed lines represent possible edges which we make no assumption if they exist or not.}
\label{fig:XOR_lemma}
\end{figure}
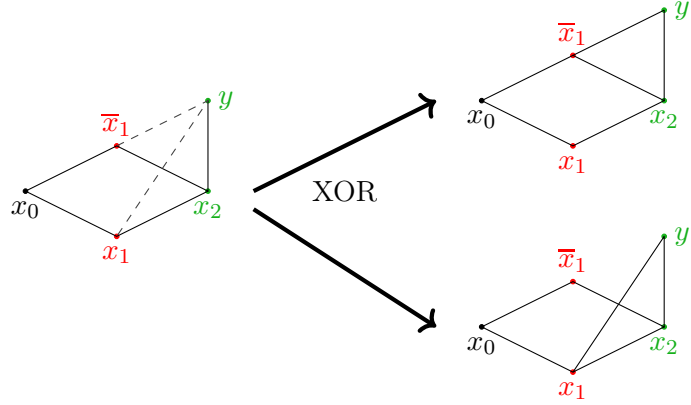

\begin{proof}[Proof of Lemma \ref{lem:XOR}]
To show the forward implication, assume for the sake of contradiction that $y\sim x_1$ and $y\sim \overline{x}_1$. Then $y$ has the same properties as $x_2$ in Lemma \ref{lem:CP2}, which contradicts to the uniqueness of such $x_2$.

To prove the backward implication, we need some counting arguments. Consider a good optimal map $B_1(x_1)\stackrel{T_2}{\rightarrow} B_1(x_2)$, and define the following sets:
\begin{eqnarray*}
A_1 &:=& \{v\in S_2(x_0): \ v\sim x_1\};\\
A_2 &:=& \{v\in S_2(x_0): \ v\sim x_1 \wedge v\sim x_2\};\\
A_3 &:=& \{v\in S_2(x_0): \ v\sim x_1 \wedge v\not\simeq x_2\};\\
A_4 &:=& \{v\in S_3(x_0): \ v\sim x_2\};\\
A_5 &:=& \{v\in S_2(x_0): \ v\sim x_2\}
\end{eqnarray*}

Firstly, note that 
\begin{equation} \label{eq:A1A2A3}
d^+(x_1)=|A_1|=1+|A_2|+|A_3|
\end{equation}
as they contribute to the out-degree of $x_1$.

For each $v\in A_3$, we know $v\not\in B_1(x_2)$, so the image $T_2(v)\not=v$ by property \ref{P2}. Corollary \ref{cor:moving} (with $m=j=2$) then implies that $T_2(v)\in S_3(x_0)$. Moreover, $T_2(v)\in B_1(x_2)$ as the codomain of $T_2$, which means $T_2(v)$ must be in $S_3(x_0)\cap B_1(x_2)= A_4$. Therefore, $T_2(A_3)\subseteq A_4$. Since $T_2$ is bijection, we have
\begin{equation} \label{eq:compare_A3A4}
|A_3|=|T_2(A_3)|\le |A_4|=d^+(x_2),
\end{equation}
which is the out-degree of $x_2$. Combining formulas \eqref{eq:A1A2A3} and \eqref{eq:compare_A3A4} gives  $$|A_2|\ge d^+(x_1)-d^+(x_2)-1=\frac{1}{2}d^0(x_2),$$
where the last equality comes from \eqref{eq:recur_S12}.

Similarly (by interchanging the roles between $x_1$ and $\overline{x}_1$), the cardinality of the set 
\begin{eqnarray*}
\overline{A}_2 &:=& \{v\in S_2(x_0): \ v\sim \overline{x}_1 \wedge v\sim x_2\}
\end{eqnarray*}
also satisfies $|\overline{A}_2|\ge \frac{1}{2}d^0(x_2)$. The forward implication shown earlier implies that $A_2\cap \overline{A}_2=\emptyset$. Therefore,
$$|A_2 \cup \overline{A}_2|=|A_2|+|\overline{A}_2|\ge d^0(x_2).$$

On the other hand, both $A_2$ and $\overline{A}_2$ are subsets of $A_5$, so $$|A_2 \cup \overline{A}_2| \le |A_5|=d^0(x_2).$$
Therefore, all the inequalities above indeed hold with equality. Consequently, $A_5$ is partitioned into $A_2\sqcup \overline{A}_2$. In other words, for every $y\in S_2(x_0)$ such that $y\sim x_2$, either $y\sim x_1$ or $y\sim \overline{x}_1$ as desired.

\end{proof}

\begin{lemma} \label{lem:prop_T2}
Let $x_0,x_1,x_2$ be defined as in Lemma \ref{lem:CP1} and let $B_1(x_1) \stackrel{T_2}{\rightarrow} B_1(x_2)$ be a good optimal transport map. Let $v\in B_1(x_1)$ and $w\in B_1(x_2)$ such that $T_2(v)=w$. Then the following statements are true.
\begin{itemize}
\item[(a)] If $w\in S_3(x_0)$, then $v\in S_2(x_0)$ and $v\not\simeq x_2$ and $v\sim w$.
\item[(b)] It is impossible that $v\in S_1(x)$ and $w\in S_3(x_0)$ simultaneously.
\item[(c)] If $v\in S_1(x_0)$ and $v\not\sim x_2$, then $w\in S_2(x_0)$ and $w\not\sim x_1$ and $v\sim w \sim x_2$.
\end{itemize}
See Figures \ref{fig:T2_map_a}, \ref{fig:T2_map_b}, \ref{fig:T2_map_c} for respective cases. 
\end{lemma}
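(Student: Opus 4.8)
\emph{Overall strategy.} I would prove part~(b) first, since parts~(a) and~(c) then follow with only short arguments based on Corollary \ref{cor:moving}.

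\emph{Proving (b).} Suppose for contradiction that $v\in S_1(x_0)$ and $w:=T_2(v)\in S_3(x_0)$. The plan is a counting argument built on the proof of Lemma \ref{lem:XOR}. In the notation of that proof, set $A_2=\{z\in S_2(x_0):z\sim x_1,\ z\sim x_2\}$, $A_3=\{z\in S_2(x_0):z\sim x_1,\ z\not\simeq x_2\}$ and $A_4=\{z\in S_3(x_0):z\sim x_2\}$, and note $A_4=S_3(x_0)\cap B_1(x_2)$. That proof establishes $T_2(A_3)\subseteq A_4$ and $|A_2|=\frac{1}{2}d^0(x_2)$. Now $w=T_2(v)$ lies in $S_3(x_0)\cap B_1(x_2)=A_4$, whereas $v\notin A_3$ since $A_3\subseteq S_2(x_0)$ while $v\in S_1(x_0)$. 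As $T_2$ is injective, $A_3\cup\{v\}$ injects into $A_4$, so $|A_3|\le|A_4|-1=d^+(x_2)-1$. Feeding this into the identity $d^+(x_1)=1+|A_2|+|A_3|$ from \eqref{eq:A1A2A3} and then \eqref{eq:recur_S12} gives $|A_2|=d^+(x_1)-1-|A_3|\ge d^+(x_1)-d^+(x_2)=\frac{1}{2}d^0(x_2)+1$, contradicting $|A_2|=\frac{1}{2}d^0(x_2)$.

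\emph{Deducing (a) and (c).} For~(a): if $w=T_2(v)\in S_3(x_0)$ then, since $v\in B_1(x_1)\subseteq B_2(x_0)$, we have $v\ne w$, so $T_2(v)\ne v$, hence $v\notin B_1(x_1)\cap B_1(x_2)$ by property \ref{P2}, i.e.\ $v\not\simeq x_2$. Writing $v\in S_m(x_0)$, Corollary \ref{cor:moving} (with $j=2$) together with $v\in B_2(x_0)$ forces $m\in\{0,1,2\}$ and $d(v,w)=3-m$; the case $m=0$ is impossible because $T_2(x_0)=\overline{x}_1\in S_1(x_0)$ by Lemma \ref{lem:CP1}, and $m=1$ is excluded by~(b), so $m=2$, i.e.\ $v\in S_2(x_0)$ and $v\sim w$. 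For~(c): if $v\in S_1(x_0)$ and $v\not\sim x_2$, then also $v\ne x_2$, so $v\notin B_1(x_2)$ and hence $T_2(v)=w\ne v$; Corollary \ref{cor:moving} (with $j=2$, $m=1$) gives $w\in S_n(x_0)$ with $n\in\{2,3\}$ and $d(v,w)=n-1$, and~(b) excludes $n=3$, so $w\in S_2(x_0)$ and $v\sim w$, and $w\ne x_2$ (else $v\sim x_2$) forces $w\sim x_2$; finally $w\not\sim x_1$, for otherwise $w\in B_1(x_1)\cap B_1(x_2)$ would be a fixed point of $T_2$, contradicting injectivity since $T_2(v)=w\ne v$.

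\emph{Main obstacle.} The whole difficulty sits in part~(b); parts~(a) and~(c) are essentially bookkeeping with Corollary \ref{cor:moving}. The crux is recognizing that the inequality $|A_3|\le|A_4|$ used in the proof of Lemma \ref{lem:XOR} is in fact an equality (equivalently $|A_2|=\frac{1}{2}d^0(x_2)$), so a hypothetical vertex $v\in S_1(x_0)$ whose $T_2$-image lands in $A_4$ overfills that set and forces the contradiction; once this is seen, the rest is immediate.
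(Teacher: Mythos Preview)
Your proposal is correct and takes essentially the same approach as the paper: both arguments hinge on the fact that the inequalities in the proof of Lemma~\ref{lem:XOR} hold with equality, and then parts~(a)--(c) follow by bookkeeping with Corollary~\ref{cor:moving} and property~\ref{P2}. The only cosmetic difference is the order and phrasing: the paper first observes that the equality $|A_3|=|A_4|$ together with $T_2(A_3)\subseteq A_4$ yields the \emph{set} equality $T_2(A_3)=A_4$, from which~(a) (and hence~(b)) is immediate; you instead start with~(b) and frame the same equality as ``an extra preimage in $S_1(x_0)$ would overfill $A_4$'', which is the contrapositive of the same fact.
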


\begin{figure}[h!]
\begin{subfigure}{.6\textwidth}
\centering
\begin{tikzpicture} [scale=1.2]
\draw[fill] (0,0) circle [radius=0.025]; 
\draw[fill,red] (1,0) circle [radius=0.025]; 
\draw[fill,green] (2,0) circle [radius=0.025];
\draw[fill,green] (2,1) circle [radius=0.025]; 
\draw[fill,blue] (3,1) circle [radius=0.025]; 
\draw (0,0)--(1,0)--(2,0)--(3,1)--(2,1)--(1,0);
\draw[-{Latex[length=2mm]},gray] (2.1,1.15)--(2.9,1.15);
\node[below] at (0,0) {$x_0$};
\node[below,red] at (1,0) {$x_1$};
\node[below,green] at (2,0) {$x_2$};
\node[above,green] at (2,1) {$v$};
\node[above,blue] at (3,1) {$w$};
\node[above] at (2.5,1.15) {$T_2$};
\end{tikzpicture}
\caption{If $w\in S_3(x_0)$, then $v\in S_2(x_0)$.}
\label{fig:T2_map_a}
\end{subfigure}
\begin{subfigure}{.4\textwidth}
\centering
\begin{tikzpicture} [scale=1.2]
\draw[fill] (0,0) circle [radius=0.025]; 
\draw[fill,red] (1,0) circle [radius=0.025]; 
\draw[fill,red] (1,1) circle [radius=0.025]; 
\draw[fill,green] (2,0) circle [radius=0.025];
\draw[fill,blue] (3,1) circle [radius=0.025]; 
\draw (1,0)--(1,1)--(0,0)--(1,0)--(2,0)--(3,1);
\draw[-{Latex[length=2mm]},gray] (1.1,1.15)--(2.9,1.15);
\node[below] at (0,0) {$x_0$};
\node[below,red] at (1,0) {$x_1$};
\node[above,red] at (1,1) {$v$};
\node[below,green] at (2,0) {$x_2$};
\node[above,blue] at (3,1) {$w$};
\node[above] at (2,1.15) {$T_2$};
\end{tikzpicture}
\caption{This scenario \textbf{cannot} happen.}
\label{fig:T2_map_b}
\end{subfigure}
\begin{subfigure}{.6\textwidth}
\centering
\begin{tikzpicture} [scale=1.2]
\draw[fill] (0,0) circle [radius=0.025]; 
\draw[fill,red] (1,0) circle [radius=0.025]; 
\draw[fill,red] (1,1) circle [radius=0.025]; 
\draw[fill,green] (2,0) circle [radius=0.025];
\draw[fill,green] (2,1) circle [radius=0.025];
\draw[fill,white] (3,0) circle [radius=0.025]; 
\draw (0,0)--(1,0)--(2,0)--(2,1)--(1,1)--(0,0);
\draw (1,0)--(1,1);
\draw[-{Latex[length=2mm]},gray] (1.1,1.15)--(1.9,1.15);
\node[below] at (0,0) {$x_0$};
\node[below,red] at (1,0) {$x_1$};
\node[above,red] at (1,1) {$v$};
\node[below,green] at (2,0) {$x_2$};
\node[above,green] at (2,1) {$w$};
\node[above] at (1.5,1.15) {$T_2$};
\end{tikzpicture}
\caption{If $v\in S_1(x_0)$ and $v\not\sim x_2$, then $w\in S_2(x_0)$.}
\label{fig:T2_map_c}
\end{subfigure}

\caption{Figures illustrating map $B_1(x_1) \stackrel{T_2}{\rightarrow} B_1(x_2)$ explained in Lemma \ref{lem:prop_T2}.}
\label{fig:T2_map}
\end{figure}

\begin{proof}[Proof of Lemma \ref{lem:prop_T2}]
Recall from the proof of Lemma \ref{lem:XOR} that the formula \eqref{eq:compare_A3A4} holds with equality, so we have $T_2(A_3)= A_4$. It means if $w\in A_4=B_1(x_2)\cap S_3(x_0)$, then $w=T_2(v)$ for some $v\in A_3$, that is $v\in S_2(x_0)$ and $v\not\simeq x_2$. Moreover, $v\sim w$ is due to Corollary \ref{cor:moving} (with $m=2$ and $n=3$). Thus the statement (a) is true. 

The statement (b) is simply a reinterpretation of (a). For the statement (c), assume that $v\in S_1(x_0)$ and $v\not\sim x_2$. Since $v\not\in B_1(x_1)\cap B_1(x_2)$, we have $w=T_2(v)\not=v$ due to property \ref{P2}. Corollary \ref{cor:moving} (with $m=1$ and $j=2$) further implies that $w\in S_2(x_0)$ or $w\in S_3(x_0)$. However, the latter case cannot happen due to (b), so $w\in S_2(x_0)$. Moreover, $w\not\sim x_1$; otherwise, $w\in B_1(x_1)\cap B_1(x_2)$ would then imply by property \ref{P2} that $T_2(v)=w=T_2(w)$, which is impossible since $w\not=v$ and $T_2$ is bijective. Recall also that $w\in B_1(x_2)$ but $w\not=x_2$ (since $w\not\sim x_1$), so $w\sim x_2$. Finally, the fact that $v\sim w$ follows from Corollary \ref{cor:moving} (with $m=1$ and $n=2$). Statement (c) is therefore proved.
\end{proof}

\begin{lemma} \label{lem:CP_tight}
Let $x_0,x_1,x_2,\overline{x}_1$ be defined as in Lemma \ref{lem:CP1} and let $u \in S_1(x_0)$ and $u\not\in \{x_1,\overline{x}_1\}$. Then $u\in [x_0,x_2]$ if and only if $u$ is connected to both $x_1$ and $\overline{x}_1$.
\end{lemma}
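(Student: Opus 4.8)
The plan is to first observe that, since $u\in S_1(x_0)$ and hence $d(x_0,u)=1$, the condition $u\in[x_0,x_2]$ is equivalent to $d(u,x_2)=1$, i.e.\ to $u\sim x_2$. So it suffices to prove, for $u\in S_1(x_0)\setminus\{x_1,\overline{x}_1\}$, that
\[
u\sim x_2 \iff \bigl(u\sim x_1 \ \text{and}\ u\sim\overline{x}_1\bigr).
\]
The implication ``$\Rightarrow$'' is soft: by Lemma \ref{lem:CP1} the induced subgraph on $[x_0,x_2]$ is a cocktail party graph in which $\{x_0,x_2\}$ and $\{x_1,\overline{x}_1\}$ are antipodal pairs. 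If $u\sim x_2$ then $u\in[x_0,x_2]\setminus\{x_0,x_2\}$, so $u$ is adjacent to every vertex of $[x_0,x_2]$ except its unique antipole; as $u\notin\{x_1,\overline{x}_1\}$ that antipole is neither $x_1$ nor $\overline{x}_1$, so $u\sim x_1$ and $u\sim\overline{x}_1$.

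For the converse I would argue by contradiction, assuming $u\sim x_1$, $u\sim\overline{x}_1$ but $u\not\sim x_2$. Using Remark \ref{rem:geo_ext} I extend $x_0\sim x_1\sim x_2$ to a full-length geodesic and fix good optimal transport maps $B_1(x_0)\stackrel{T_1}{\rightarrow}B_1(x_1)\stackrel{T_2}{\rightarrow}B_1(x_2)$. Since $u\in B_1(x_0)\cap B_1(x_1)$, property \ref{P2} gives $T_1(u)=u$; since $u\in B_1(x_1)$ but $u\notin B_1(x_2)$ (because $u\not\sim x_2$ and $u\ne x_2$), property \ref{P2} gives $w:=T_2(u)\ne u$. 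Then Lemma \ref{lem:prop_T2}(c), applied with $v=u$, shows $w\in S_2(x_0)$, $w\not\sim x_1$ and $u\sim w\sim x_2$; and Lemma \ref{lem:XOR} with $y=w$ (valid as $w\in S_2(x_0)$, $w\sim x_2$), together with $w\not\sim x_1$, forces $w\sim\overline{x}_1$.

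It remains to turn the configuration now at hand — $u\in S_1(x_0)$ adjacent to $x_1$ and $\overline{x}_1$ but not to $x_2$, and $w\in S_2(x_0)$ adjacent to $u$, $\overline{x}_1$, $x_2$ but not to $x_1$ — into a contradiction. I would apply Lemma \ref{lem:CP1} to the geodesic $x_0\sim u\sim w$: then $[x_0,w]$ is a cocktail party graph containing $x_0,w,u,\overline{x}_1$ ($u$ and $\overline{x}_1$ being middle vertices, adjacent to both $x_0$ and $w$) but containing neither $x_1$ (as $x_1\not\sim w$) nor $x_2$ (as $x_2\not\sim x_0$). Running the same argument symmetrically along the geodesic $x_0\sim\overline{x}_1\sim x_2$ (with good maps $B_1(x_0)\stackrel{\overline{T}_1}{\rightarrow}B_1(\overline{x}_1)\stackrel{\overline{T}_2}{\rightarrow}B_1(x_2)$) produces $w':=\overline{T}_2(u)$ with $w'\sim u$, $w'\sim x_1$, $w'\sim x_2$, $w'\not\sim\overline{x}_1$; and the bijectivity of $T_2$ and $\overline{T}_2$ (cf.\ the proof of Lemma \ref{lem:XOR}) yields the counting identities $|U|=|\overline{U}|=\frac{1}{2}d^{0}(x_2)$ for $U=\{v\in S_1(x_0):v\sim x_1,\ v\not\sim x_2\}$ and $\overline{U}=\{v\in S_1(x_0):v\sim\overline{x}_1,\ v\not\sim x_2\}$, with $u\in U\cap\overline{U}$. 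Comparing $|N(x_1)\cap N(\overline{x}_1)|$ as computed inside $[x_0,x_2]$ against its contributions from $x_0$, $x_2$, the common neighbours in $S_1(x_0)$, and those in $S_2(x_0)$ — the latter being exactly $\{x_2\}$ by Lemma \ref{lem:CP2} — should then contradict a uniqueness statement of Lemma \ref{lem:CP1} or \ref{lem:CP2}, or a degree identity of Corollary \ref{cor:recur}. \textbf{The main obstacle is precisely this last step}: carrying out the contradiction, equivalently showing $U\cap\overline{U}=\emptyset$. The delicate part is the bookkeeping of which of the auxiliary vertices ($w$, $w'$, and the antipoles of $u$ and of $\overline{x}_1$ inside $[x_0,w]$) lie in which sphere and interval; it is quite possible that the cleanest route is instead to extract a monotonicity property of the good maps $T_1,T_2$ along the transport geodesics of Proposition \ref{prop:transport_geo}.
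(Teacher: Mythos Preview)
Your forward implication is correct and identical to the paper's. The backward implication, however, has a genuine gap, which you yourself flag: after producing $w=T_2(u)$ and $w'=\overline{T}_2(u)$ and the counting $|U|=|\overline{U}|=\tfrac12 d^0(x_2)$, you have no mechanism to derive a contradiction from $u\in U\cap\overline{U}$. The difficulty is real: the statement $U\cap\overline{U}=\emptyset$ is \emph{equivalent} to the backward implication you are trying to prove, so you are circling back to the goal. None of Lemmas \ref{lem:CP1}, \ref{lem:CP2}, \ref{lem:XOR}, \ref{lem:prop_T2} by themselves force this disjointness, and the local configuration you build around $u,w,w'$ is perfectly consistent with all of them.

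The paper takes a different route for the backward implication and the missing ingredient is Proposition \ref{prop:count_tri}, the curvature-derived lower bound $\#_\Delta(a,b)\ge t:=\tfrac{2D}{L}-2$ on \emph{every} edge. Fixing $a\in S_1(x_0)$, the paper counts ordered pairs $(b,c)$ with $a\sim b\sim c$ and $b\in S_1(x_0)$ in two ways. One count uses only $d^0$-information and gives $|A_1|=t(t-1)$; the other counts triangles on each edge $a\sim b$ and, via $\#_\Delta(a,b)\ge t$, gives $|A_5|\ge t^2$. Sandwiching these with the injection $A_4\hookrightarrow A_2$ (coming from the cocktail-party structure of length-two intervals) forces $|A_4|=|A_2|$, hence that injection is a bijection, which is exactly the implication $(u\sim x_1\text{ and }u\sim\overline{x}_1)\Rightarrow u\sim x_2$. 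So the triangle bound from the curvature hypothesis is what closes the argument; your transport-map manipulations alone do not access this global constraint.
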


\begin{figure}[h!]
\centering
\begin{tikzpicture} [scale=1.5]
\draw[fill] (0,0) circle [radius=0.025]; 
\draw[fill,red] (1,1) circle [radius=0.025];
\draw[fill,red] (1,-1) circle [radius=0.025]; 
\draw[fill,red] (0.9,0) circle [radius=0.025]; 
\draw[fill,green] (2,0) circle [radius=0.025];
\draw (0,0)--(1,1)--(2,0)--(1,-1)--(0,0);
\node[below] at (0,0) {$x_0$};
\node[below,red] at (1,-1) {$x_1$};
\node[below,red] at (0.9,0) {$u$};
\node[above,red] at (1,1) {$\overline{x}_1$};
\node[below,green] at (2,0) {$x_2$};
\draw (0,0)--(2,0);
\draw[darkgray,dashed] (1,1)--(0.9,0);
\draw[darkgray,dashed] (0.9,0)--(1,-1);

\draw[fill] (0+5,0) circle [radius=0.025]; 
\draw[fill,red] (1+5,1) circle [radius=0.025];
\draw[fill,red] (1+5,-1) circle [radius=0.025]; 
\draw[fill,red] (0.9+5,0) circle [radius=0.025]; 
\draw[fill,green] (2+5,0) circle [radius=0.025];
\draw (0+5,0)--(1+5,1)--(2+5,0)--(1+5,-1)--(0+5,0);
\node[below] at (0+5,0) {$x_0$};
\node[below,red] at (1+5,-1) {$x_1$};
\node[below,red] at (0.9+5,0) {$u$};
\node[above,red] at (1+5,1) {$\overline{x}_1$};
\node[below,green] at (2+5,0) {$x_2$};
\draw (0+5,0)--(0.9+5,0);
\draw (1+5,1)--(0.9+5,0)--(1+5,-1);
\draw[darkgray,dashed] (0.9+5,0)--(2+5,0);
\draw[ultra thick, <->] (3,0)--(4,0);
\end{tikzpicture}
\caption{Figure illustrating Lemma \ref{lem:CP_tight}.}
\label{fig:CP_tight_lemma}
\end{figure}

\begin{proof}[Proof of Lemma \ref{lem:CP_tight}]
The forward implication is due to the interval $[x_0,x_2]$ being a cocktail party graph by Lemma \ref{lem:CP1}.

For backward implication, assume $u\in S_1(x_0)$ such that  $x_1\sim u\sim \overline{x}_1$, and we would like to show that $u\sim x_2$.
We will go a long way around for some counting arguments.

Fix an arbitrary vertex $a\in S_1(x_0)$. We define the following sets:
\begin{eqnarray*}
A_1 &:=& \{ (b,c)\in S_1(x_0)^2: \ a\sim b\sim c \wedge a\not=c\}; \\
A_2 &:=& \{ (b,c)\in S_1(x_0)^2: \ a\sim b\sim c  \wedge a\not\simeq c \};\\
A_3 &:=& \{ (b,c)\in S_1(x_0)^2: \ a\sim b\sim c  \wedge a\sim c \};\\
A_4 &:=& \{(b,c')\in S_1(x_0)\times S_2(x_0): \ a\sim b\sim c'  \wedge a\sim c' \};\\
A_5 &:=& \{(b,c)\in S_1(x_0)\times V: \  a\sim b\sim c \sim a \}.
\end{eqnarray*}
To count $A_1$, there are $t$ possibilities for vertex $b$, and for each $b$ there are $t-1$ possible vertices for $c$, where $t:=d^0(v)=\frac{2D}{L}-2$ for any $v\in S_1(x_0)$; see formula \eqref{eq:recur_S1}. Thus $|A_1|=t(t-1)$. Note also that $A_1$ is partitioned into $A_2 \sqcup A_3$.

Next, we would like to compare $A_2$ and $A_4$. We firstly realize a one-to-one correspondence between
$$C_1:=\{c\in S_1(x_0): \ c\not\simeq a\} \qquad \text{and} \qquad C_2:=\{c'\in S_2(x_0): \ c'\sim a\}$$ as follows.
For each $c\in C_1$, there is a unique $c'\in C_2$ with $c'\sim c$ by Lemma \ref{lem:CP2}. On the other hand, for any $c'\in C_2$, there is a unique $c\in C_1$ with $c\sim c'$ by Lemma \ref{lem:CP1}. Note that each corresponding pair $(c,c')$ forms a quadrilateral $\square x_0ac'c$. For each such pair $(c,c')$, all possible vertices $b\in S_1(x_0)$ that $a\sim b\sim c'$ are exactly all those vertices inside the interval $[x_0,c']$ excluding $\{x_0,a,c',c\}$ since $[x_0,c']$ is a cocktail party graph by Lemma \ref{lem:CP1}. At the same time, these vertices $b$ cannot be an antipole of $c$ in the cocktail party graph $[x_0,c']$ (because the only antipole of $c$ is $a$), so we have $a\sim b \sim c$. The above argument means 
\begin{equation} \label{eq:compare_A2A4}
(b,c')\in A_4 \quad \Leftrightarrow \quad b\in [x_0,c']\backslash\{x_0,a,c',c\} \quad \Rightarrow \quad (b,c)\in A_2.
\end{equation}
Note also that the map $(b,c)\mapsto (b,c')$ induced by \eqref{eq:compare_A2A4} is injective, which results in $|A_4|\le |A_2|$.

We wish to prove the converse of \eqref{eq:compare_A2A4}, that is, to prove that the map $(b,c)\mapsto (b,c')$ is surjective. The desired result $u\sim x_2$ from the beginning of the proof will then follow by substituting $a=x_1$, $c=\overline{x}_1$, $c'=x_2$ and $b=u$. To verify this surjection, it suffices to show that $|A_4|= |A_2|$.

For the cardinality of $A_5$, we need to count the number of triangles $\triangle abc$ such that $b\in S_1(x_0)$ and $a\sim b$. A vertex $c$ could either come from $S_1(x_0)$ or $S_2(x_0)$, or $c=x_0$. In case $c\in S_1(x_0)$, the number of $\triangle abc$ is exactly $|A_3|$. In case $c\in S_2(x_0)$, the number of $\triangle abc$ is exactly $|A_4|$. If $c=x_0$, then there are $t$ possibilities for such a vertex $b$. Overall, we have 
\begin{equation} \label{eq:count_A5}
|A_5|=|A_3|+|A_4|+t \le |A_3|+|A_2|+t=|A_1|+t=t(t-1)+t=t^2.
\end{equation}

On the other hand, since there are $d^0(a)=t$ possibilities for $b\in S_1(x_0)$ such that $b\sim a$, and for each $b$, we know that $\#_\Delta(a,b)\ge t$ from Proposition \ref{prop:count_tri}. It means $|A_5| \ge t^2$, which implies that the inequality \eqref{eq:count_A5} holds equality. Therefore $|A_4|=|A_2|$ as desired.
\end{proof}

The following two lemmas, which are inverse of each other, explain the situation when switching the roles between $x_1$ and $\overline{x}_1$; see Figure \ref{fig:CP_swap_lemma}.

\begin{lemma} \label{lem:CP_swap1}
Let $x_0,x_1,\overline{x}_1,x_2$ be as defined in Lemma \ref{lem:CP1}. Let $u\in S_1(x_0)$ such that $u\sim x_1$ but $u\not\sim x_2$, and let $v\in S_2(x_0)$ such that $u\sim v \sim x_2$ but $v\not\sim x_1$. Then $x_0,u,v,\overline{x}_1$ form a quadrilateral with $u\not\sim \overline{x}_1$. In fact, for any such vertex $u$, there exists a unique such vertex $v$, namely $v=T_2(u)$ where $B_1(x_1) \stackrel{T_2}{\rightarrow} B_1(x_2)$ is a good optimal transport map.
\end{lemma}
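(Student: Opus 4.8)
The plan is to produce the witness explicitly as $v=T_2(u)$ using Lemma~\ref{lem:prop_T2}(c), and then to read off both the quadrilateral shape and the uniqueness from Lemmas~\ref{lem:XOR}, \ref{lem:CP_tight} and \ref{lem:CP2}, so that essentially no fresh counting is needed.

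First I would note that $u\sim x_1$ gives $u\in B_1(x_1)$, so $w:=T_2(u)$ is defined; since $u\in S_1(x_0)$ and $u\not\sim x_2$, Lemma~\ref{lem:prop_T2}(c) yields $w\in S_2(x_0)$, $w\not\sim x_1$, and $u\sim w\sim x_2$. Thus $w$ is already a vertex of the type demanded of $v$, which handles existence and isolates the candidate.

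Next let $v$ be \emph{any} vertex with $v\in S_2(x_0)$, $u\sim v\sim x_2$, $v\not\sim x_1$. I first record that $u\neq\overline{x}_1$ (since $u\sim x_1$ while $\overline{x}_1\not\simeq x_1$) and that $u\not\sim\overline{x}_1$: if $u\sim\overline{x}_1$, then $u$ is adjacent to both $x_1$ and $\overline{x}_1$, so Lemma~\ref{lem:CP_tight} (applicable since $u\in S_1(x_0)\setminus\{x_1,\overline{x}_1\}$) would force $u\in[x_0,x_2]$, hence $u\sim x_2$, contradicting the choice of $u$. Then Lemma~\ref{lem:XOR} applied with $y=v$ (legitimate because $v\in S_2(x_0)$ and $v\sim x_2$), together with $v\not\sim x_1$, gives $v\sim\overline{x}_1$. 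Hence $x_0\sim u\sim v\sim\overline{x}_1\sim x_0$ is a $4$-cycle on four distinct vertices (note $u,\overline{x}_1\in S_1(x_0)$, $v\in S_2(x_0)$, $x_0\notin S_1(x_0)\cup S_2(x_0)$, and $u\neq\overline{x}_1$), with $x_0\not\sim v$ (because $v\in S_2(x_0)$) and $u\not\sim\overline{x}_1$; that is, $x_0,u,v,\overline{x}_1$ form the asserted quadrilateral.

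For uniqueness, every admissible $v$ lies in $S_2(x_0)$ and satisfies $u\sim v\sim\overline{x}_1$ with $u,\overline{x}_1\in S_1(x_0)$ and $u\not\simeq\overline{x}_1$, so Lemma~\ref{lem:CP2} (applied to the pair $u,\overline{x}_1$ in place of $x_1,\overline{x}_1$) guarantees that at most one such vertex exists; since $w=T_2(u)$ is one such vertex, we conclude $v=T_2(u)$ for every admissible $v$. I do not foresee a genuine obstacle here: the only care needed is in checking the hypotheses of each invoked lemma---notably $u\notin\{x_1,\overline{x}_1\}$ for Lemma~\ref{lem:CP_tight}, $v\sim x_2$ for Lemma~\ref{lem:XOR}, and $u\not\simeq\overline{x}_1$ for Lemma~\ref{lem:CP2}---and in extracting the properties of $w$ directly from Lemma~\ref{lem:prop_T2}(c) rather than re-deriving them. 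Were the XOR lemma unavailable, the uniqueness step would instead call for a counting argument analogous to the proof of Lemma~\ref{lem:CP_tight}.
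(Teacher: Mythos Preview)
Your proof is correct and follows essentially the same route as the paper: existence via $v=T_2(u)$ and Lemma~\ref{lem:prop_T2}(c), then $v\sim\overline{x}_1$ from Lemma~\ref{lem:XOR}, $u\not\sim\overline{x}_1$ from Lemma~\ref{lem:CP_tight}, and uniqueness from Lemma~\ref{lem:CP2} applied to the pair $u,\overline{x}_1$. You are somewhat more explicit than the paper in verifying the side conditions (e.g., $u\notin\{x_1,\overline{x}_1\}$), but the argument is the same.
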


\begin{lemma} \label{lem:CP_swap2}
Let $x_0,x_1,\overline{x}_1,x_2$ be as defined in Lemma \ref{lem:CP1}. Let $u\in S_1(x_0)$ such that $u\not\sim \overline{x}_1$ and $u\not\sim x_2$, and let $v\in S_2(x_0)$ such that $u\sim v \sim x_2$ and $v\sim \overline{x}_1$. Then $x_1,u,v,x_2$ form a quadrilateral with $u\not\sim x_2$ and $v\not\sim x_1$.
\end{lemma}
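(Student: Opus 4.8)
The plan is to produce the vertex $u$ as the $T_2$-preimage of $v$ and read off everything from that. I would fix a good optimal transport map $B_1(x_1)\stackrel{T_2}{\rightarrow}B_1(x_2)$ (allowed after extending $x_0\sim x_1\sim x_2$ to a full-length geodesic, cf.\ Remark~\ref{rem:geo_ext}) and set $w:=T_2^{-1}(v)$; the aim is to show $w=u$. The conclusion $v\not\sim x_1$ is immediate from Lemma~\ref{lem:XOR}, since $v\in S_2(x_0)$, $v\sim x_2$ and $v\sim\overline{x}_1$. Also $u\notin\{x_1,\overline{x}_1\}$, because $u\not\sim x_2$ while $x_1\sim x_2$ and $\overline{x}_1\sim x_2$.

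I would then pin down $w=T_2^{-1}(v)$. From $v\not\sim x_1$ and $v\neq x_1$ we have $v\notin B_1(x_1)$, so $w\neq v$ by property~\ref{P2}, and $w\in[x_0,v]$ by Corollary~\ref{cor:interval}. As $d(x_0,v)=2$ and $w\neq v$, either $w=x_0$ or $w\in S_1(x_0)$ with $w\sim v$. The option $w=x_0$ fails: then $v=T_2(x_0)=\overline{x}_1$ by Lemma~\ref{lem:CP1}, contradicting $v\in S_2(x_0)$. The option $w=x_1$ fails: since $x_1\in B_1(x_1)\cap B_1(x_2)$, property~\ref{P2} gives $v=T_2(x_1)=x_1$, impossible. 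So $w\in S_1(x_0)\setminus\{x_1\}$, hence $w\sim v$ and $w\sim x_1$; also $w\neq\overline{x}_1$ since $\overline{x}_1\notin B_1(x_1)$. Finally $w\not\sim\overline{x}_1$: otherwise $w\in S_1(x_0)\setminus\{x_1,\overline{x}_1\}$ is adjacent to both $x_1$ and $\overline{x}_1$, so $w\in[x_0,x_2]$ by Lemma~\ref{lem:CP_tight}, i.e.\ $w\sim x_2$; then $w\in B_1(x_1)\cap B_1(x_2)$ forces $v=T_2(w)=w\in S_1(x_0)$, a contradiction.

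To conclude, apply Lemma~\ref{lem:CP1} to the triple $x_0\sim\overline{x}_1\sim v$: the interval $[x_0,v]$ is a cocktail party graph. Now $u$ and $w$ both lie in $[x_0,v]$ (as $x_0\sim u\sim v$ and $x_0\sim w\sim v$), both differ from $\overline{x}_1\in[x_0,v]$, and both are non-adjacent to $\overline{x}_1$; since each vertex of a cocktail party graph has exactly one non-neighbour, $u=w$, and in particular $u\sim x_1$. Together with the edges $x_1\sim u\sim v\sim x_2\sim x_1$ and the non-edges $u\not\sim x_2$, $v\not\sim x_1$, this displays $x_1,u,v,x_2$ as a quadrilateral. (Equivalently, this checks that $(u,v)$ is exactly the pair arising in Lemma~\ref{lem:CP_swap1}, so the two lemmas are genuinely inverse to each other.)

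The delicate point is the identification $u=w$: one has to extract enough structure from $w=T_2^{-1}(v)$ — that it lies in $S_1(x_0)$, meets $x_1$, and, crucially, \emph{misses} $\overline{x}_1$ — after ruling out the degenerate cases $w\in\{x_0,x_1\}$ and $w\sim\overline{x}_1$ via property~\ref{P2}, Lemma~\ref{lem:CP1}, and Lemma~\ref{lem:CP_tight}; only then does the cocktail-party rigidity of $[x_0,v]$ pin $u$ and $w$ to the same vertex, namely the unique non-neighbour of $\overline{x}_1$ in $[x_0,v]$.
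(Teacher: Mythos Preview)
Your proof is correct and follows essentially the same route as the paper's: both set $u'=T_2^{-1}(v)$, use Lemma~\ref{lem:XOR} for $v\not\sim x_1$, locate $u'$ in $S_1(x_0)\cap S_1(x_1)$ via Corollary~\ref{cor:interval} and property~\ref{P2}, establish $u'\not\sim\overline{x}_1$, and then identify $u'=u$ as the unique antipole of $\overline{x}_1$ in the cocktail party graph $[x_0,v]$. The only cosmetic difference is that the paper first checks $u'\not\sim x_2$ and then invokes Lemma~\ref{lem:CP_swap1} to obtain $u'\not\sim\overline{x}_1$, whereas you argue $w\not\sim\overline{x}_1$ directly via Lemma~\ref{lem:CP_tight}; since the proof of Lemma~\ref{lem:CP_swap1} uses precisely that \ref{lem:CP_tight}-argument, the two are the same reasoning unpacked to different depths.
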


\begin{figure}[h!]
\centering
\begin{tikzpicture} [scale=1.5]
\draw[fill] (0,0) circle [radius=0.025]; 
\draw[fill,red] (1,1) circle [radius=0.025]; 
\draw[fill,red] (1,0) circle [radius=0.025]; 
\draw[fill,green] (2,0) circle [radius=0.025];
\draw[fill,green] (2,1) circle [radius=0.025];
\draw (0,0)--(1,1)--(2,1)--(2,0)--(0,0);
\draw (1,0)--(1,1);
\node[below] at (0,0) {$x_0$};
\node[below,red] at (1,0) {$x_1$};
\node[above,red] at (1,1) {$u$};
\node[below,green] at (2,0) {$x_2$};
\node[above,green] at (2,1) {$v$};

\draw[fill] (0+5,0) circle [radius=0.025]; 
\draw[fill,red] (1+5,0) circle [radius=0.025]; 
\draw[fill,red] (1+5,1) circle [radius=0.025]; 
\draw[fill,green] (2+5,0) circle [radius=0.025];
\draw[fill,green] (2+5,1) circle [radius=0.025];
\draw (0+5,0)--(1+5,1)--(2+5,1)--(2+5,0)--(0+5,0);
\draw (1+5,0)--(2+5,1);
\node[below] at (0+5,0) {$x_0$};
\node[below,red] at (1+5,0) {$\overline{x}_1$};
\node[above,red] at (1+5,1) {$u$};
\node[below,green] at (2+5,0) {$x_2$};
\node[above,green] at (2+5,1) {$v$};

\draw[ultra thick, ->] (2.5,0.6)--(4.5,0.6);
\draw[ultra thick, <-] (2.5,0.45)--(4.5,0.45);
\node[above] at (3.5,0.6) {Lemma \ref{lem:CP_swap1}};
\node[below] at (3.5,0.45) {Lemma \ref{lem:CP_swap2}};

\end{tikzpicture}
\caption{Figure illustrating Lemma \ref{lem:CP_swap1} and \ref{lem:CP_swap2}}
\label{fig:CP_swap_lemma}
\end{figure}

\begin{proof} [Proof of Lemma \ref{lem:CP_swap1}]
Firstly, note that such a vertex $v$ exists by considering $v:=T_2(u)$ and recall Lemma \ref{lem:prop_T2}(c).

Since $v\not\sim x_1$ (by assumption), it is implied by Lemma \ref{lem:XOR} that $v\sim \overline{x}_1$. Moreover, $u\sim x_1$ (by assumption) implies that $u\not\sim \overline{x}_1$ (otherwise, $u$ connected to both $x_1$ and $\overline{x}_1$ would imply by Lemma \ref{lem:CP_tight} that $u\in [x_0,x_2]$, which contradicts to the assumption that $u\not\sim x_2$). Therefore, we have a quadrilateral $\square x_0uv\overline{x}_1$ with $u\not\sim \overline{x}_1$ as desired. The uniqueness of $v$ can be seen by Lemma \ref{lem:CP2} (with respect to $x_0,u,\overline{x}_1$).
\end{proof}

\begin{proof} [Proof of Lemma \ref{lem:CP_swap2}]
Consider the vertex $u':=T_2^{-1}(v)\in B_1(x_1)$ and we claim that $u'$ and $v$ satisfy
\begin{enumerate}
\item $u'\in S_1(x_0)$ and $u'\sim x_1$ but $u'\not\sim x_2$;
\item $v\in S_2(x_0)$ and $u'\sim v \sim x_2$ but $v\not\sim x_1$.
\end{enumerate}

If we manage to prove these two claims and show that indeed $u'=u$, we will be done. 

In claim 2, the fact that $v\in S_2(x_0)$ and $v \sim x_2$ is actually a part of assumption on the vertex $v$. Moreover, the fact that $v\not\sim x_1$ follows from Lemma \ref{lem:XOR} since $v\sim \overline{x}_1$. We are left to prove in claim 2 that $u'\sim v$.

Note that $u'\in [x_0,v]$ by Corollary \ref{cor:interval}. However, $u'\not=x_0$ (since $T_2(x_0)=\overline{x}_1$ by Lemma \ref{lem:CP1}) and $u'\not=v$ (since $v\not\in B_1(x_1)$ but $u'\in B_1(x_1)$). Therefore $u'$ must lie inside the interior of $[x_0,v]$ (i.e., excluding endpoints $x_0,v$). It means that $x_0\sim u' \sim v$ and $u'\in S_1(x_0)$. This finishes the proof of claim 1. Furthermore, we had $u'\in B_1(x_1)$ but $u'\not=x_1$ (because $T_2(x_1)=x_1$ by property \ref{P2}), so $u'\sim x_1$. Lastly, $u'\not\sim x_2$; otherwise, $u'\in B_1(x_1)\cap B_1(x_2)$ would imply that $v=T_2(u')=u'$ by property \ref{P2}, which is false. This finishes the proof of claim 1.

Now since $u'$ and $v$ satisfy all claimed properties, Lemma \ref{lem:CP_swap1} (with $u$ replaced by $u'$) implies that $x_0,u',v,\overline{x}_1$ form a quadrilateral with $u'\not\sim \overline{x}_1$. Recall earlier that $x_0,u,v,\overline{x}_1$ also form a quadrilateral with $u\not\sim \overline{x}_1$ by assumption. In view of Lemma \ref{lem:CP1}, the uniqueness of the antipole of $\overline{x}_1$ in the cocktail party graph $[x_0,v]$ then implies $u'=u$ as desired.
\end{proof}


\section{Intervals of length three} \label{sec:3ball}

Continuing from the previous section, this section provides results concerning intervals of length three starting at a pole $x_0$ of a $D$-regular Bonnet-Myers sharp graph $G=(V,E)$.

Let us start with $x_0$, $x_1$, $\overline{x}_1$, $x_2$ defined as in Lemma \ref{lem:CP1}. As explained Remark \ref{rem:geo_ext}, the extension of geodesics $x_0\sim x_1\sim x_2$ and $x_0\sim \overline{x}_1 \sim x_2$ allows us to simultaneously consider good optimal transport maps $B_1(x_0) \stackrel{T_1}{\rightarrow} B_1(x_1) \stackrel{T_2}{\rightarrow}  B_1(x_2)$ and $B_1(x_0) \stackrel{\overline{T}_1}{\rightarrow} B_1(\overline{x}_1) \stackrel{\overline{T}_2}{\rightarrow}  B_1(x_2)$.

\begin{theorem} \label{thm:uvw}
Let $u\in S_1(x_0)$. Assume that $u\stackrel{T_1}{\mapsto} v \stackrel{T_2}{\mapsto} w$, and that $u,v,w$ are all different. Then $w\in S_3(x_0)$, and $u$ and $x_2$ are antipoles of each other with respect to the interval $[x_0,w]$. In other words, $u,x_2\in [x_0,w]$ and $d(u,x_2)$=3.
\end{theorem}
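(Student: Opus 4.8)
The strategy is to read off everything from the maps $T_1,T_2$ together with the parallel maps $\overline{T}_1,\overline{T}_2$ along $x_0\sim\overline{x}_1\sim x_2$, and then invoke the length-two structure lemmas of Section~\ref{sec:local}. First I would record the elementary consequences of the hypotheses: since $u\neq v=T_1(u)$, property~\ref{P2} gives $u\notin B_1(x_0)\cap B_1(x_1)$, hence $u\not\sim x_1$ and $u\neq x_1$; Corollary~\ref{cor:moving} then forces $v\in S_2(x_0)$ with $u\sim v$, and $v\in B_1(x_1)$, $v\neq x_1$ give $v\sim x_1$. Similarly $v\neq w=T_2(v)$ gives $v\not\sim x_2$ and $v\neq x_2$, Corollary~\ref{cor:moving} gives $w\in S_3(x_0)$ with $v\sim w$, and $w=T_2(v)\in B_1(x_2)$ with $w\neq x_2$ gives $w\sim x_2$. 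Thus $w\in S_3(x_0)$ (the first assertion), and from $d(x_0,x_2)+d(x_2,w)=2+1=3$ together with $d(x_0,u)+d(u,w)=1+2=3$ (since $u\sim v\sim w$ and $d(x_0,u)=1$, $d(x_0,w)=3$ force $d(u,w)=2$) I obtain $x_2,u\in[x_0,w]$. As $d(x_0,w)=3$ and $d(u,x_2)\le d(u,w)+d(w,x_2)=3$, the statement that $u,x_2$ are antipoles in $[x_0,w]$ reduces to the single equality $d(u,x_2)=3$.

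Before proving that, I would collect three auxiliary facts. (i) $u\neq\overline{x}_1$, since otherwise $v=T_1(\overline{x}_1)=x_2$ by Lemma~\ref{lem:CP1}, against $v\neq x_2$. (ii) $u\not\sim x_2$, since otherwise $u\in[x_0,x_2]$, a cocktail party graph by Lemma~\ref{lem:CP1}, in which $u\not\sim x_1$ would force $u=\overline{x}_1$, contradicting (i). (iii) The key fact, $u\not\sim\overline{x}_1$: assume $u\sim\overline{x}_1$. Using the good maps $B_1(x_0)\stackrel{\overline{T}_1}{\rightarrow}B_1(\overline{x}_1)\stackrel{\overline{T}_2}{\rightarrow}B_1(x_2)$, property~\ref{P2} gives $\overline{T}_1(u)=u$, and since $u\in S_1(x_0)$ and $u\not\sim x_2$, Lemma~\ref{lem:prop_T2}(c) (applied to the geodesic $x_0\sim\overline{x}_1\sim x_2$ and the map $\overline{T}_2$) yields $v^{\ast}:=\overline{T}_2(u)\in S_2(x_0)$ with $v^{\ast}\not\sim\overline{x}_1$ and $u\sim v^{\ast}\sim x_2$. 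Then Lemma~\ref{lem:XOR} gives $v^{\ast}\sim x_1$, so $v$ and $v^{\ast}$ are both vertices of $S_2(x_0)$ adjacent to $u$ and to $x_1$; by the uniqueness in Lemma~\ref{lem:CP2} this forces $v^{\ast}=v$, contradicting $v\not\sim x_2$.

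It remains to prove $d(u,x_2)=3$, and I would do so by contradiction. Suppose $d(u,x_2)\le 2$; since $u\not\sim x_2$ this means $d(u,x_2)=2$, so there is $z$ with $u\sim z\sim x_2$ and $z\neq u,x_2$, and $d(x_0,z)\le 2$ with $z\not\sim x_0$ forces $z\in S_1(x_0)\cup S_2(x_0)$. If $z\in S_1(x_0)$, then along the geodesic $x_0\sim z\sim x_2$ we have $u\in B_1(z)$ and $u\not\sim x_2$, so Lemma~\ref{lem:prop_T2}(c) (applied to $x_0\sim z\sim x_2$) produces a common neighbour of $u$ and $x_2$ lying in $S_2(x_0)$; replacing $z$ by it, I may assume $z\in S_2(x_0)$. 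Now $z\in S_2(x_0)$ and $z\sim x_2$, so by Lemma~\ref{lem:XOR} exactly one of $z\sim x_1$, $z\sim\overline{x}_1$ holds. If $z\sim x_1$, then $z$ is a vertex of $S_2(x_0)$ adjacent to $u$ and to $x_1$, so $z=v$ by Lemma~\ref{lem:CP2}, contradicting $v\not\sim x_2$. If $z\sim\overline{x}_1$ (hence $z\not\sim x_1$), then $u$ (which satisfies $u\not\sim\overline{x}_1$ by (iii) and $u\not\sim x_2$ by (ii)) and $z$ (which satisfies $u\sim z\sim x_2$ and $z\sim\overline{x}_1$) meet the hypotheses of Lemma~\ref{lem:CP_swap2}, whose conclusion is the quadrilateral $x_1\sim u\sim z\sim x_2\sim x_1$; in particular $u\sim x_1$, a contradiction. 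Hence $d(u,x_2)=3$, which completes the proof. The step I expect to be the main obstacle is (iii): $u\not\sim\overline{x}_1$ is exactly what rules out a short route $u\sim\overline{T}_2(u)\sim x_2$, and it cannot be obtained from $T_1,T_2$ alone but requires the mirror maps $\overline{T}_1,\overline{T}_2$ combined with Lemmas~\ref{lem:prop_T2}, \ref{lem:XOR}, and~\ref{lem:CP2}; a secondary technical point is the reduction of an arbitrary common neighbour of $u$ and $x_2$ to one in $S_2(x_0)$ in the last paragraph.
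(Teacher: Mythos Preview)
Your proof is correct and follows the paper's overall architecture: establish $w\in S_3(x_0)$ and $u,x_2\in[x_0,w]$ directly from Corollary~\ref{cor:moving}, record $u\not\sim x_1$, $u\not\sim x_2$, $v\not\sim x_2$, prove the key fact $u\not\sim\overline{x}_1$ via the mirror map $\overline{T}_2$ and Lemma~\ref{lem:prop_T2}(c), and then rule out a length-two path $u\sim z\sim x_2$ after reducing to $z\in S_2(x_0)$.

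The one genuine difference is in the final case split. When the intermediate vertex satisfies $z\sim\overline{x}_1$, the paper first constructs $\overline{v}:=\overline{T}_1(u)$ and proves (its Claims~4--7) that $\overline{v}\in S_2(x_0)$, $u\sim\overline{v}\sim\overline{x}_1$, and $\overline{v}\not\sim x_2$; it then concludes $z=\overline{v}$ via Lemma~\ref{lem:CP2}, contradicting $z\sim x_2$. You bypass this entire construction by invoking Lemma~\ref{lem:CP_swap2} directly, which immediately forces $u\sim x_1$, a contradiction. This is legitimate and shorter: Lemma~\ref{lem:CP_swap2} already encapsulates the work that the paper redoes by hand (indeed its proof produces exactly the vertex $u'=T_2^{-1}(v)$ and shows $u'\sim x_1$). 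The paper's longer route has the side benefit of exhibiting the full mirror transport geodesic $u\mapsto\overline{v}\mapsto\overline{w}$, which is conceptually pleasing but not needed for the theorem as stated. Your handling of Claim~3/(iii) via Lemma~\ref{lem:XOR} in place of the paper's Lemma~\ref{lem:CP_swap1} is likewise an equivalent minor variation.
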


\begin{proof} [Proof of theorem \ref{thm:uvw}]
Given that $u\in S_1(x_0)$ and $u,v,w$ are all different, the following fact can be seen from Corollary \ref{cor:moving}.

\underline{\bf Fact 1} $u\in S_1(x_0)$, $v\in S_2(x_0)$, $w\in S_3(x_0)$, and $u \sim v \sim w$.

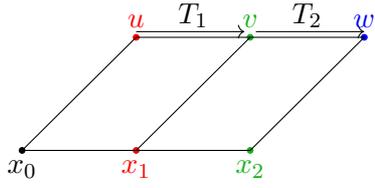
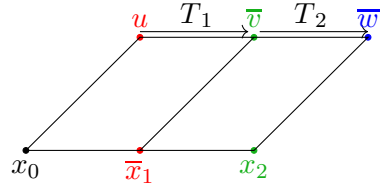
\begin{figure}[h!]
\begin{subfigure}{0.5\textwidth}
\centering
\begin{tikzpicture} [scale=1.5]
\draw[fill] (0,0) circle [radius=0.025]; 
\draw[fill,red] (1,0) circle [radius=0.025]; 
\draw[fill,red] (1,1) circle [radius=0.025]; 
\draw[fill,green] (2,0) circle [radius=0.025];
\draw[fill,green] (2,1) circle [radius=0.025]; 
\draw[fill,blue] (3,1) circle [radius=0.025]; 
\draw (0,0)--(1,0)--(2,0)--(3,1)--(2,1)--(1,1)--(0,0);
\draw (1,0)--(2,1);
\draw[->] (1,1+.05)--(2-0.05,1+.05);
\draw[->] (2+0.05,1+.05)--(3,1+.05);
\node[below] at (0,0) {$x_0$};
\node[below,red] at (1,0) {$x_1$};
\node[above,red] at (1,1) {$u$};
\node[below,green] at (2,0) {$x_2$};
\node[above,green] at (2,1) {$v$};
\node[above,blue] at (3,1) {$w$};
\node[above] at (1.5,1) {$T_1$};
\node[above] at (2.5,1) {$T_2$};
\end{tikzpicture}
\caption{Summary of Fact 1 and Claim 1 and 2}
\label{fig:uvw_1}
\end{subfigure}
\begin{subfigure}{0.5\textwidth}
\centering
\begin{tikzpicture} [scale=1.5]
\draw[fill] (0,0) circle [radius=0.025]; 
\draw[fill,red] (1,0) circle [radius=0.025]; 
\draw[fill,red] (1,1) circle [radius=0.025]; 
\draw[fill,green] (2,0) circle [radius=0.025];
\draw[fill,green] (2,1) circle [radius=0.025]; 
\draw[fill,blue] (3,1) circle [radius=0.025]; 
\draw (0,0)--(1,0)--(2,0)--(3,1)--(2,1)--(1,1)--(0,0);
\draw (1,0)--(2,1);
\draw[->] (1,1+.05)--(2-0.05,1+.05);
\draw[->] (2+0.05,1+.05)--(3,1+.05);
\node[below] at (0,0) {$x_0$};
\node[below,red] at (1,0) {$\overline{x}_1$};
\node[above,red] at (1,1) {$u$};
\node[below,green] at (2,0) {$x_2$};
\node[above,green] at (2,1) {$\overline{v}$};
\node[above,blue] at (3,1) {$\overline{w}$};
\node[above] at (1.5,1) {$\overline{T}_1$};
\node[above] at (2.5,1) {$\overline{T}_2$};
\end{tikzpicture}
\caption{Summary of Claim 7}
\label{fig:uvw_2}
\end{subfigure}
\caption{Summary of all the seven claims}
\label{fig:uvw}
\end{figure}

Let $\overline{v},\overline{w}$ be the images $u\stackrel{\overline{T}_1}{\mapsto} \overline{v} \stackrel{\overline{T}_2}{\mapsto} \overline{w}$. We would like to verify the following seven claims.\\
\underline{\bf Claim 1} $u\not=x_1$, $v\not=x_2$, $u\not=\overline{x}_1$, and $\overline{v}\not=x_2$.\\
\underline{\bf Claim 2} $u\not\sim x_1$, $u\not\sim x_2$, and $v\not\sim x_2$.\\
\underline{\bf Claim 3} $u\not\sim \overline{x}_1$.\\
\underline{\bf Claim 4} $u$ and $\overline{v}$ are different, $\overline{v}\in S_2(x_0)$ and $u\sim \overline{v}$.\\
\underline{\bf Claim 5} $\overline{v}\not\sim x_2$.\\
\underline{\bf Claim 6} $\overline{v}$ and $\overline{w}$ are different, $\overline{w}\in S_3(x_0)$, and $\overline{v} \sim \overline{w}$.\\
\underline{\bf Claim 7} $u\in S_1(x_0)$, $\overline{v}\in S_2(x_0)$, $\overline{w}\in S_3(x_0)$, and $u \sim \overline{v} \sim \overline{w}$.

Indirect proof for Claim 1:\\
If $u=x_1$, then $v=T_1(u)=u$ by property \ref{P2} of the map $T_1$. \\
If $v=x_2$, then $w=T_2(v)=v$ by property \ref{P2} of the map $T_2$. \\
If $u=\overline{x}_1$, then $v=T_1(u)=T_1(\overline{x}_1)=x_2$ by Lemma \ref{lem:CP1}. This contradicts to $v\not=x_2$, shown previously.\\
Lastly, if $\overline{v}=x_2$, then $u=\overline{T}_1^{-1}(v)=\overline{T}_1^{-1}(x_2)=x_1$ by Lemma \ref{lem:CP1} (where the roles of $x_1$ and $\overline{x}_1$ are interchanged and the map $T_1$ is replaced by $\overline{T}_1$). This contradicts to $u\not=x_1$, shown previously.

Indirect proof for Claim 2:\\
If $u\sim x_1$, then $v=T_1(u)=u$ by property \ref{P2} of the map $T_1$.\\
If $u\sim x_2$, then the fact that $u\not\sim x_1$ implies that $u=\overline{x}_1$ by the uniqueness in Lemma \ref{lem:CP1}. This contradicts to Claim 1.\\
If $v\sim x_2$, then $w=T_2(v)=v$ by property \ref{P2} of the map $T_2$.

Fact 1 and Claim 1 and 2 altogether can be illustrated by Figure \ref{fig:uvw_1}.

Next, we will prove Claim 3 also by indirect proof. Assume for the sake of contradiction that $u\simeq \overline{x}_1$, i.e., $u\in B_1(\overline{x}_1)$. The fact that $u\not=\overline{x}_1$ from Claim 1 means $u\sim \overline{x}_1$. Since $u\in B_1(\overline{x}_1)$ lies in the domain of $\overline{T}_2$, we can define a vertex $v':=\overline{T}_2(u)$. Now we have $u\in B_1(\overline{x}_1) \cap S_1(x_0)$ and $u\not \sim x_2$ (from Claim 2). We can then apply Lemma \ref{lem:prop_T2}(c) (with the map $T_2$ replaced by $\overline{T}_2$ and with vertices $x_1,v,w$ substituted by $\overline{x}_1,u,v'$, respectively) to conclude that $v'\in S_2(x_0)$ and $v'\not\sim \overline{x}_1$ and $u\sim v'\sim x_2$. To visualize this, you may compare Figure \ref{fig:T2_map_c} to Figure \ref{fig:claim_uv} (left diagram) with the mentioned substitution.
 
Lemma \ref{lem:CP_swap1} then implies that $v'\sim x_1$ and $v'\sim x_2$, as illustrated in Figure \ref{fig:claim_uv} (which can be compared to Figure \ref{fig:CP_swap_lemma} where the roles of $x_1$ and $\overline{x}_1$ are interchanged, and $v$ is replaced by $v'$).

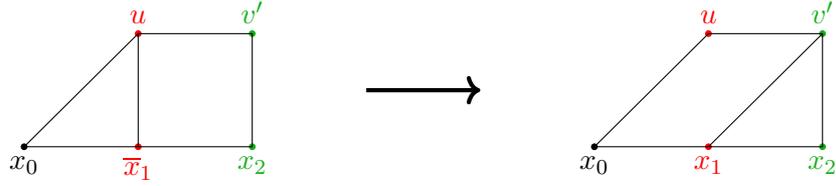
\begin{figure}[h!]
\centering
\begin{tikzpicture} [scale=1.5]
\draw[fill] (0,0) circle [radius=0.025]; 
\draw[fill,red] (1,0) circle [radius=0.025]; 
\draw[fill,red] (1,1) circle [radius=0.025]; 
\draw[fill,green] (2,0) circle [radius=0.025];
\draw[fill,green] (2,1) circle [radius=0.025];
\draw (0,0)--(1,1)--(2,1)--(2,0)--(0,0);
\draw (1,0)--(1,1);
\node[below] at (0,0) {$x_0$};
\node[below,red] at (1,0) {$\overline{x}_1$};
\node[above,red] at (1,1) {$u$};
\node[below,green] at (2,0) {$x_2$};
\node[above,green] at (2,1) {$v'$};

\draw[fill] (0+5,0) circle [radius=0.025]; 
\draw[fill,red] (1+5,0) circle [radius=0.025]; 
\draw[fill,red] (1+5,1) circle [radius=0.025]; 
\draw[fill,green] (2+5,0) circle [radius=0.025];
\draw[fill,green] (2+5,1) circle [radius=0.025];
\draw (0+5,0)--(1+5,1)--(2+5,1)--(2+5,0)--(0+5,0);
\draw (1+5,0)--(2+5,1);
\node[below] at (0+5,0) {$x_0$};
\node[below,red] at (1+5,0) {$x_1$};
\node[above,red] at (1+5,1) {$u$};
\node[below,green] at (2+5,0) {$x_2$};
\node[above,green] at (2+5,1) {$v'$};
\draw[ultra thick, ->] (3,0.5)--(4,0.5);

\end{tikzpicture}
\caption{Situation assumed that $u\simeq \overline{x}_1$ (for the sake of contradiction of Claim 3)}
\label{fig:claim_uv}
\end{figure}

Now we know $\square x_0u v' x_1$ is a quadrilateral without diagonal edges, and the same holds true for $\square x_0u vx_1$. However, Lemma \ref{lem:CP2} asserts that $v'=v$. This is impossible since $v'\sim x_2$ but $v\not\sim x_2$ (from Claim 2). Therefore Claim 3 must be true: $u\not\sim \overline{x}_1$.

Thus far, $u\not\simeq \overline{x}_1$ from Claims 3 and 1. On the other hand, $\overline{v}=\overline{T}_1(u)\in B_1(\overline{x}_1)$ as the codomain of $\overline{T}_1$, so $u\not=\overline{v}$. Corollary \ref{cor:moving} then implies that $\overline{v}=\overline{T}_1(u)$ must lie in $S_2(x_0)$ and that $u\sim \overline{v}$, as stated in Claim 4.

Next, we prove Claim 5 indirectly: assume that $\overline{v}\simeq x_2$. The fact that from $\overline{v}\not= x_2$ Claim 1 means $\overline{v}\sim x_2$.  Under this assumption, we have the following information as illustrated in the left diagram of Figure \ref{fig:claim_vw}: $u\in S_1(x_0)$, $u\not\sim \overline{x}_1$ and $u\not\sim x_2$ (from Claim 2) and $\overline{v}\in S_2(x_0)$ and $u\sim \overline{v}\sim x_2$ (from Claim 4 and assumption) and $\overline{v}\sim \overline{x}_1$ (as $\overline{v}$ is in the codomain of $\overline{T}_1$). Lemma \ref{lem:CP_swap2} then implies that $x_1,u,\overline{v},x_2$ forms a quadrilateral (compare Figure \ref{fig:CP_swap_lemma} to Figure \ref{fig:claim_vw}, where $v$ is replaced by $v'$). This contradicts to the fact that $u\not\sim x_1$ in Claim 2. Therefore, Claim 5 must be true.

\begin{figure}[h!]
\centering
\begin{tikzpicture} [scale=1.5]
\draw[fill] (0,0) circle [radius=0.025]; 
\draw[fill,red] (1,0) circle [radius=0.025]; 
\draw[fill,red] (1,1) circle [radius=0.025]; 
\draw[fill,green] (2,0) circle [radius=0.025];
\draw[fill,green] (2,1) circle [radius=0.025];
\draw (0,0)--(1,1)--(2,1)--(2,0)--(0,0);
\draw (1,0)--(2,1);
\node[below] at (0,0) {$x_0$};
\node[below,red] at (1,0) {$\overline{x}_1$};
\node[above,red] at (1,1) {$u$};
\node[below,green] at (2,0) {$x_2$};
\node[above,green] at (2,1) {$\overline{v}$};

\draw[fill] (0+5,0) circle [radius=0.025]; 
\draw[fill,red] (1+5,0) circle [radius=0.025]; 
\draw[fill,red] (1+5,1) circle [radius=0.025]; 
\draw[fill,green] (2+5,0) circle [radius=0.025];
\draw[fill,green] (2+5,1) circle [radius=0.025];
\draw (0+5,0)--(1+5,1)--(2+5,1)--(2+5,0)--(0+5,0);
\draw (1+5,0)--(1+5,1);
\node[below] at (0+5,0) {$x_0$};
\node[below,red] at (1+5,0) {$x_1$};
\node[above,red] at (1+5,1) {$u$};
\node[below,green] at (2+5,0) {$x_2$};
\node[above,green] at (2+5,1) {$\overline{v}$};
\draw[ultra thick, ->] (3,0.5)--(4,0.5);

\end{tikzpicture}
\caption{Situation assumed that $\overline{v}\sim x_2$ (for the sake of contradiction of Claim 5)}
\label{fig:claim_vw}
\end{figure}
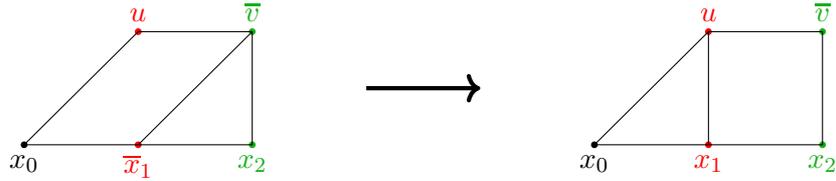

Thus far, $\overline{v}\not\simeq x_2$ from Claims 5 and 1. Since $\overline{v}\in S_2(x_0)$ from Claim 4, Corollary \ref{cor:moving} implies that $\overline{w}=\overline{T}_2(\overline{v})$ must lie in $S_3(x_0)$ and that $\overline{v} \sim \overline{w}$, which means Claim 6 is true. Note that Claim 7 is a combination of Claim 4 and 6, and it can be illustrated as in Figure \ref{fig:uvw_2}.

Now we are ready to prove the statement of the theorem. The fact that both $u$ and $x_2$ lie in $[x_0,w]$ can be seen from $x_0\sim u\sim v \sim w$ and $x_0\sim x_1 \sim x_2 \sim w$ (where $x_2 \sim w$ comes from the fact that $w=T_2(v) \in B_1(x_2)$ and $w\not=x_2$). We are left to prove that $d(u,x_2)=3$.

Obviously, $d(u,x_2)\le 3$, and $d(u,x_2)>1$ because $u\not=x_2$ (since $u\in S_1(x_0)$ but $x_2\in S_2(x_0)$) and $u\not\sim x_2$ from Claim 2. Assume for the sake of contradiction that $d(u,x_2)=2$. Then there is a vertex $b$ such that $u\sim b\sim x_2$. Note that $b$ is in either $S_1(x_0)$ or $S_2(x_0)$. The next paragraph argues that we can assume without loss of generality that $b\in S_2(x_0)$.

In case $b\in S_1(x_0)$, the geodesic extension of $x_0\sim b\sim x_2$ (see Remark \ref{rem:geo_ext}) allows us to consider good optimal transport maps $B_1(x_0)\stackrel{\tau_1}{\rightarrow}  B_1(b) \stackrel{\tau_2}{\rightarrow} B_1(x_2)$. Since $u\in S_1(x_0)$ and $u\sim b$ but $u\not\sim x_2$ (from Claim 2), Lemma \ref{lem:prop_T2}(c) implies that the vertex $\beta:=\tau_2(u)$ satisfies $\beta\in S_2(x_0)$ and $u\sim \beta\sim x_2$ (see Figure \ref{fig:beta_b} compared to Figure \ref{fig:T2_map}(c) with vertices $b,u,\beta$ replacing $x_1,v,w$, respectively). Hence we can assume without loss of generality that $b\in S_2(x_0)$; otherwise, we may consider $\beta$ instead $b$.

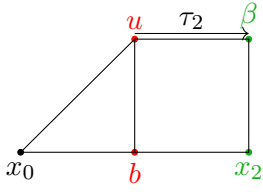
\begin{figure}[h!]
\centering
\begin{tikzpicture} [scale=1.5]
\draw[fill] (0,0) circle [radius=0.025]; 
\draw[fill,red] (1,0) circle [radius=0.025]; 
\draw[fill,red] (1,1) circle [radius=0.025]; 
\draw[fill,green] (2,0) circle [radius=0.025];
\draw[fill,green] (2,1) circle [radius=0.025];
\draw[fill,white] (3,0) circle [radius=0.025]; 
\draw (0,0)--(1,0)--(2,0)--(2,1)--(1,1)--(0,0);
\draw (1,0)--(1,1);
\draw[->] (1,1+.05)--(2,1+.05);
\node[below] at (0,0) {$x_0$};
\node[below,red] at (1,0) {$b$};
\node[above,red] at (1,1) {$u$};
\node[below,green] at (2,0) {$x_2$};
\node[above,green] at (2,1) {$\beta$};
\node[above] at (1.5,1) {$\tau_2$};
\end{tikzpicture}
\caption{If $u\sim b\sim x_2$ with $b\in S_1(x_0)$, then $u\sim \beta \sim x_2$ with $\beta\in S_2(x_0)$.}
\label{fig:beta_b}
\end{figure}

Now we are able to assume $b\in S_2(x_0)$ and $u\sim b \sim x_2$. Since $b\in S_2(x_0)$ and $b\sim x_2$, Lemma \ref{lem:XOR} implies that either $b\sim x_1$ or $b\sim \overline{x}_1$. If $b\sim x_1$, then $x_0,u,b,x_1$ form a quadrilateral with $u\not\sim x_1$ (from Claim 2). Since we also have a quadrilateral $\square x_0uvx_1$ (see Figure \ref{fig:uvw}(a)), so the uniqueness by Lemma \ref{lem:CP2} guarantees that $b=v$, which is impossible since $b\sim x_2$ but $v\not\sim x_2$ (from Claim 2). By similar arguments, if $b\sim \overline{x}_1$, then $x_0,u,b,\overline{x}_1$ form a quadrilateral with $u\not\sim\overline{x}_1$ (from Claim 3). By comparing this quadrilateral to $\square x_0u\overline{v} x_1$ (see Figure \ref{fig:uvw}(b)), the uniqueness by Lemma \ref{lem:CP2} implies $b=\overline{v}$, which is also impossible since $b\sim x_2$ but $\overline{v}\not\sim x_2$ (from Claim 5).

In conclusion,  $d(u,x_2)=3$ as desired.
\end{proof}

Another important theorem that follows from Theorem \ref{thm:uvw} is about the existence of antipoles with respect to intervals of length three.
\begin{theorem} \label{thm:dist3_sc}
Let $x_0$ be a pole and $x_3\in S_3(x_0)$. Then for each vertex $y\in [x_0,x_3]$ such that $y\in S_2(x_0)$, there exists another vertex $u\in [x_0,x_3]$ such that $d(u,y)=3$.
\end{theorem}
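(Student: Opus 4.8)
The plan is to reduce the statement directly to Theorem \ref{thm:uvw}, with $y$ playing the role of the vertex ``$x_2$'' there and $x_3$ the role of ``$w$''. First I would record an elementary consequence of the hypotheses: since $y\in[x_0,x_3]\cap S_2(x_0)$ and $x_3\in S_3(x_0)$, the relation $d(x_0,y)+d(y,x_3)=d(x_0,x_3)=3$ forces $d(y,x_3)=1$, i.e.\ $y\sim x_3$. Next I would choose any $x_1\in S_1(x_0)$ lying on a geodesic $x_0\sim x_1\sim y$, apply Lemma \ref{lem:CP1} (with $x_2$ renamed to $y$) to produce the partner vertex $\overline{x}_1$, and invoke Remark \ref{rem:geo_ext} to fix good optimal transport maps $B_1(x_0)\stackrel{T_1}{\rightarrow}B_1(x_1)\stackrel{T_2}{\rightarrow}B_1(y)$. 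This puts us into exactly the standing configuration of Section \ref{sec:3ball} with $x_2$ renamed $y$.

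The key step is to exhibit a vertex $u\in S_1(x_0)$ with $u\stackrel{T_1}{\mapsto}v\stackrel{T_2}{\mapsto}x_3$ where $u,v,x_3$ are pairwise distinct, so that Theorem \ref{thm:uvw} applies. Since $y\sim x_3$, the vertex $x_3$ lies in $B_1(y)\cap S_3(x_0)$, hence $v:=T_2^{-1}(x_3)$ is well defined, and Lemma \ref{lem:prop_T2}(a) (again with $x_2$ renamed $y$) gives $v\in S_2(x_0)$ and $v\sim x_3$. Then I would set $u:=T_1^{-1}(v)$; Corollary \ref{cor:interval} yields $u\in[x_0,v]$, and a one-line argument using property \ref{P2} rules out both $u=x_0$ and $u=v$ (either would force $v\in B_1(x_0)$, contradicting $v\in S_2(x_0)$), so $u\in S_1(x_0)$. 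Distinctness of $u,v,x_3$ is automatic, since they lie in $S_1(x_0)$, $S_2(x_0)$, $S_3(x_0)$ respectively.

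With this configuration in place, Theorem \ref{thm:uvw} applies verbatim and tells us that $u$ and $y$ are antipoles with respect to the interval $[x_0,x_3]$; in particular $u\in[x_0,x_3]$ and $d(u,y)=3$, which is precisely the assertion, and $u\neq y$ since $d(u,y)=3>0$. I do not expect a genuine obstacle here: all the substance is already packaged inside Theorem \ref{thm:uvw}, and the only thing that needs care is verifying that Lemma \ref{lem:CP1}, Lemma \ref{lem:prop_T2} and Theorem \ref{thm:uvw} are genuinely insensitive to renaming ``$x_2$'' as ``$y$'' — that is, that their proofs use nothing about $x_2$ beyond ``$x_2\in S_2(x_0)$ lying on a geodesic $x_0\sim x_1\sim x_2$''.
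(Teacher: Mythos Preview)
Your proposal is correct and follows essentially the same route as the paper: rename $y$ as $x_2$, pick $x_1$ and good maps $T_1,T_2$, set $v:=T_2^{-1}(x_3)$ and $u:=T_1^{-1}(v)$, use Lemma \ref{lem:prop_T2}(a) to place $v\in S_2(x_0)$, check $u\in S_1(x_0)$, and conclude via Theorem \ref{thm:uvw}. Your justification that $u\neq x_0$ via property \ref{P2} (forcing $v\in B_1(x_0)$) is in fact a bit cleaner than the paper's own one-line argument at that step.
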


\begin{proof}[Proof of Theorem \ref{thm:dist3_sc}]
For $y\in [x_0,x_3]$ such that $y\in S_2(x_0)$, we label $y$ as $x_2$. Choose an arbitrary vertex $x_1$ such that $x_0\sim x_1 \sim x_2$, and consider good optimal transport maps $B_1(x_0) \stackrel{T_1}{\rightarrow} B_1(x_1) \stackrel{T_2}{\rightarrow}  B_1(x_2)$. Let $w:=x_3 \in S_3(x_0)$, and let vertices $u,v$ be the preimages $u\stackrel{T_1^{-1}}{\mapsfrom} v \stackrel{T_2^{-1}}{\mapsfrom} w$. Lemma \ref{lem:prop_T2}(a) implies that $v\in S_2(x_0)$. We also know $u\in B_1(x_0)$ since it lies in the domain of $T_1$. However, observe that $u\not=x_0$; otherwise, $v=T_1(u)=T_1(x_0)=\overline{x}_1\in S_1(x_0)$ by Lemma \ref{lem:CP1}. Therefore, $u\in S_1(x_0)$, $v\in S_2(x_0)$, $w\in S_3(x_0)$, and we can apply Theorem \ref{thm:uvw} to conclude that the vertex $u$ satisfies $u\in [x_0,x_3]$ and $d(u,y)=3$.
\end{proof}

\begin{corollary} \label{cor:bm3}
Let $G = (V,E)$ be a $D$-regular Bonnet-Myers sharp graph with diameter $L=3$, then $G$ is self-centered. 
\end{corollary}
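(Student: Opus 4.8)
The plan is to show directly that every vertex $z\in V$ is a pole. Since $G$ has diameter $3$, there is a pole $x_0$ (pick two vertices at distance $3$; each is a pole), and because $L=3$ the vertex set decomposes as $V=\{x_0\}\cup S_1(x_0)\cup S_2(x_0)\cup S_3(x_0)$, so it suffices to handle $z\in S_i(x_0)$ for $i=0,1,2,3$. The cases $i=0$ and $i=3$ are immediate: $z=x_0$ is a pole, and if $z\in S_3(x_0)$ then $d(x_0,z)=3=\diam(G)$, so $z$ is a pole with antipole $x_0$.

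For $i=2$ I would fix an antipole $x_3$ of $x_0$. By Remark~\ref{rem:geo_ext} the interval $[x_0,x_3]$ is all of $V$, so in particular $z\in[x_0,x_3]$ with $z\in S_2(x_0)$, and Theorem~\ref{thm:dist3_sc} (applied with $y=z$) produces a vertex $u\in[x_0,x_3]$ with $d(u,z)=3$; hence $z$ is a pole. The case $i=1$ is the only one not handled directly by Theorem~\ref{thm:dist3_sc}, and is the crux. Here $z\sim x_0$, so for an antipole $x_3$ of $x_0$ we have $2\le d(z,x_3)\le \diam(G)=3$. If $d(z,x_3)=3$ then $z$ is a pole. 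If $d(z,x_3)=2$, I would change the base point to $x_3$: it is itself a pole (with antipole $x_0$), $z\in S_2(x_3)$, and Remark~\ref{rem:geo_ext} applied at $x_3$ gives $[x_3,x_0]=V\ni z$, consistent with $d(x_3,z)+d(z,x_0)=2+1=3=d(x_3,x_0)$. Now Theorem~\ref{thm:dist3_sc}, invoked with the pole $x_3$, the vertex $x_0\in S_3(x_3)$ playing the role of the distinguished element of $S_3$, and $y=z$, yields $u\in[x_3,x_0]$ with $d(u,z)=3$, so $z$ is a pole.

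The main obstacle is exactly this $S_1$ subcase: Theorem~\ref{thm:dist3_sc} only guarantees a distance-$3$ partner for vertices lying at distance $2$ from a pole, so one must observe that a neighbour of a pole $x_0$ is either already an antipole of $x_0$ or lies at distance $2$ from an antipole $x_3$, which lets one re-apply the same theorem from $x_3$ viewed as a new pole. Everything else is routine bookkeeping with the spheres $S_i(x_0)$ together with the identity $[x_0,x_3]=V$ supplied by Remark~\ref{rem:geo_ext}.
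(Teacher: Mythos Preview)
Your proof is correct and follows essentially the same approach as the paper's, which also reduces everything to applying Theorem~\ref{thm:dist3_sc} from whichever of the two poles $x_0$, $x_3$ places the given vertex at distance $2$. The paper streamlines your case analysis by observing directly from $[x_0,x_3]=V$ that every $y\notin\{x_0,x_3\}$ lies in $S_2(x_0)\cup S_2(x_3)$; in particular your $i=1$ subcase $d(z,x_3)=3$ is actually vacuous.
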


\begin{proof}[Proof of Corollary \ref{cor:bm3}]
Let $x_0$ and $x_3$ be antipoles of each other. We know from \cite[Theorem 5.5]{rigidity} that $[x_0,x_3]=V$, so any vertex $y$ different from $x_0$ and $x_3$ must lie in $S_2(x_0)$ or $S_2(x_3)$. In either case, we can apply Theorem \ref{thm:dist3_sc} (since both $x_0$ and $x_3$ are poles) and conclude that there exists a vertex $u$ such that $d(u,y)=3$.
\end{proof}

Finally, our main result (Theorem \ref{thm:bm_diam3}) is an immediate consequence of Corollary \ref{cor:bm3} with the classification given in Theorem \ref{thm:bm_sc} in the introduction.

\end{document}